\documentclass[]{interact}

\usepackage{epstopdf}
\usepackage[caption=false]{subfig}
\usepackage{xcolor}
\usepackage{amsmath}
\usepackage{amsmath,cases}
\usepackage{mathtools}
\usepackage{xcolor}
\usepackage[numbers,sort&compress]{natbib}
\usepackage{mathtools}
\bibpunct[, ]{[}{]}{,}{n}{,}{,}
\renewcommand\bibfont{\fontsize{10}{12}\selectfont}
\makeatletter
\def\NAT@def@citea{\def\@citea{\NAT@separator}}
\makeatother

\theoremstyle{plain}
\newtheorem{theorem}{Theorem}[section]
\newtheorem{lemma}[theorem]{Lemma}

\newtheorem{proposition}[theorem]{Proposition}

\theoremstyle{definition}
\newtheorem{definition}[theorem]{Definition}
\newtheorem{example}[theorem]{Example}

\theoremstyle{remark}

\DeclareMathOperator{\sign}{sign}

\begin{document}

\articletype{Research Paper}

\title{A Topological Approach to Singular Double-Phase Equations with Variable Exponents}

\author{\name{Mustafa Avci\thanks{CONTACT M.~Avci. Email:  mavci@athabascau.ca (primary) \& avcixmustafa@gmail.com}}
\affil{Faculty of Science and Technology, Applied Mathematics, Athabasca University, AB, Canada}}

\maketitle

\begin{abstract}
In the present paper, we study a singular double phase variable exponent Dirichlet problem in the setting of a new Musielak-Orlicz Sobolev space with the nonlinearity (the external source) having gradient dependence (so-called convection terms). We apply a topological existence result incorporating the Leray–Schauder degree and homotopy mapping together to prove the existence of at least one nontrivial solution.
\end{abstract}

\begin{keywords}
Double phase problem; singularity; homotopy; Leray-Schauder degree; Hardy-type potential; Musielak-Orlicz Sobolev space
\end{keywords}

\begin{amscode}
35A01; 35A16; 35D30; 35J75
\end{amscode}

\section{Introduction}
In this article, we study the following double phase singular problem

\begin{equation}\label{e1.1}
\begin{cases}
\begin{array}{rlll}
&-\mathrm{div}(|\nabla u|^{p(x)-2}\nabla u+\mu(x)|\nabla u|^{q(x)-2}\nabla u)\\
&+|x|^{-p(x)}|u|^{p(x)-2}u+|x|^{-q(x)}\mu(x)|u|^{q(x)-2}u\\
&=f(x,u,\nabla u) \text{ in }\Omega, \\
&u=0  \qquad \qquad \qquad \qquad \qquad \quad \quad \text{ on }\partial \Omega,\tag{$\mathcal{P}$}
\end{array}
\end{cases}
\end{equation}
where $\Omega$ is a bounded domain in $\mathbb{R}^N$ $(N\geq 3)$ with Lipschitz boundary; $p,q \in C(\overline{\Omega })$; $f$ is a Carathéodory function; and $0\leq \mu(\cdot)\in L^\infty(\Omega)$.\\

The equations of type (\ref{e1.1}) are found in models of materials with non-uniform (anisotropic) properties, fluid mechanics, image processing, and elasticity with non-homogeneous materials.
Thus the problem (\ref{e1.1}) could be used to model various real-world phenomena mostly because of the existence of the operator
\begin{equation}\label{e1.2a}
\mathrm{div}\left(|\nabla u|^{p(x)-2}\nabla u+\mu(x)|\nabla u|^{q(x)-2}\nabla u\right)
\end{equation}
 which governs anisotropic and heterogeneous diffusion associated with the energy functional
\begin{equation}\label{e1.2b}
u\to \int_{\Omega}\left(\frac{|\nabla u|^{p(x)}}{p(x)}+\mu(x)\frac{|\nabla u|^{q(x)}}{q(x)}\right)dx,\,\ u \in W_0^{1,\mathcal{H}}(\Omega)
\end{equation}
is called a "double phase" operator because it encapsulates two different types of elliptic behavior within the same framework. The functional of type (\ref{e1.2b}) was introduced in \cite{zhikov1987averaging} for the constant exponents. Subsequently, many studies have been devoted in this direction (see, e.g. \cite{baroni2015harnack,baroni2018regularity,colombo2015bounded,colombo2015regularity,marcellini1991regularity,marcellini1989regularity}) due to its applicability into different disciplines. The nonlinearity $f(x,u,\nabla u)$, on the other hand, captures the effects of convection, which combines advection with nonlinear diffusion or reaction phenomena, depending on the system.

We would like to mention that, to our best knowledge, there have been few papers to handle the equations involving variable exponent double phase operator with convection term where the main method used was monotone operator theory. We refer the interested reader to the studies \cite{bahrouni2019double,cencelj2018double,galewski2024variational,gasinski2020existence,bu2022p,lapa2015no,el2023existence,albalawi2022gradient,motreanu2020quasilinear} and the references there in.

Primarily inspired by the cited works, we study a double phase  variable exponent singular problem with a convection term. However, a key challenge and innovation of this article lies in our consideration of a distance-dependent singularity $\frac{|u|^{p(x)-2}u}{|x|^{p(x)}}+\mu(x)\frac{|u|^{q(x)-2}u}{|x|^{q(x)}}$, i.e. a Hardy-type potential, which makes the model we suggest more applicable since it could additionally model localized phenomena (e.g., near wells, defects, or critical points). As well, we use a new topological existence result \cite[Theorem 4.1]{avci2019topological} incorporated with the Leray–Schauder degree and homotopy mapping to obtain the existence of at least one nontrivial solution to problem (\ref{e1.1}), which would be another innovation presented in this article.\\

The paper is organised as follows. In Section 2, we first provide some background for the theory of variable Sobolev spaces $W_{0}^{1,p(\cdot)}(\Omega)$ and the Musielak-Orlicz Sobolev space $W_0^{1,\mathcal{H}}(\Omega)$, and then present a crucial auxiliary result: a Hardy-type inequality. In Section 3, we set up the nonlinear operator equations corresponding to problem (\ref{e1.1}), and show the existence
of weak solutions to the considered problem.

\section{Mathematical Background and Auxiliary Results}

We start with some basic concepts of variable Lebesgue-Sobolev spaces. For more details, and the proof of the following propositions, we refer the reader to \cite{cruz2013variable,diening2011lebesgue,edmunds2000sobolev,fan2001spaces,radulescu2015partial}.
\begin{equation*}
C_{+}\left( \overline{\Omega }\right) =\left\{h\in C(\overline{\Omega }):\, h(x)>1 \text{ for all\ }x\in \overline{\Omega }\right\} .
\end{equation*}
For $h\in C_{+}( \overline{\Omega }) $ denote
\begin{equation*}
h^{-}:=\underset{x\in \overline{\Omega }}{\min }h( x), \quad  h^{+}:=\underset{x\in \overline{\Omega }}{\max}h(x) <\infty .
\end{equation*}
For any $h\in C_{+}\left( \overline{\Omega }\right) $, we define \textit{the variable exponent Lebesgue space} by
\begin{equation*}
L^{h(x)}(\Omega) =\left\{ u\mid u:\Omega\rightarrow\mathbb{R}\text{ is measurable},\int_{\Omega }|u(x)|^{h(x)}dx<\infty \right\}.
\end{equation*}
Then, $L^{h(x)}(\Omega)$ endowed with the norm
\begin{equation*}
\|u\|_{h(x)}=\inf \left\{ \lambda>0:\int_{\Omega }\left\vert \frac{u(x)}{\lambda }
\right\vert ^{h(x)}dx\leq 1\right\} ,
\end{equation*}
becomes a Banach space.\\
The convex functional $\rho :L^{h(x) }(\Omega) \rightarrow\mathbb{R}$ defined by
\begin{equation*}
\rho(u) =\int_{\Omega }|u(x)|^{h(x)}dx,
\end{equation*}
is called modular on $L^{h(x)}(\Omega)$.

\begin{proposition}\label{Prop:2.2} If $u,u_{n}\in L^{h(x)}(\Omega)$, we have
\begin{itemize}
\item[$(i)$] $\|u\|_{h(x)}<1 ( =1;>1) \Leftrightarrow \rho(u) <1 (=1;>1);$
\item[$(ii)$] $\|u\|_{h(x) }>1 \implies |u|_{h(x)}^{h^{-}}\leq \rho(u) \leq |u|_{h(x) }^{h^{+}}$;\newline
$|u|_{h(x)}\leq1 \implies \|u\|_{h(x)}^{h^{+}}\leq \rho(u) \leq \|u\|_{h(x)}^{h^{-}};$
\item[$(iii)$] $\lim\limits_{n\rightarrow \infty }\|u_{n}-u\|_{h(x)}=0\Leftrightarrow \lim\limits_{n\rightarrow \infty }\rho (u_{n}-u)=0$.
\end{itemize}
\end{proposition}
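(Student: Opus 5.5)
The three statements rest on one elementary observation about scaling inside the modular. For fixed $u\in L^{h(x)}(\Omega)$ with $u\neq 0$, consider
\[
\varphi(\lambda):=\rho(u/\lambda)=\int_\Omega |u(x)|^{h(x)}\lambda^{-h(x)}\,dx ,\qquad \lambda>0 .
\]
Since $1<h^-\le h(x)\le h^+<\infty$, for $\lambda\ge 1$ we have $\lambda^{-h^+}\le \lambda^{-h(x)}\le\lambda^{-h^-}$, and for $0<\lambda\le 1$ the inequalities reverse. Hence
\begin{align*}
\lambda^{-h^+}\rho(u)\le \varphi(\lambda)\le \lambda^{-h^-}\rho(u)\quad(\lambda\ge1),\qquad
\lambda^{-h^-}\rho(u)\le \varphi(\lambda)\le \lambda^{-h^+}\rho(u)\quad(0<\lambda\le1).
\end{align*}
In particular $\varphi$ is finite for every $\lambda>0$. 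By dominated convergence, with the dominating function taken on compact subintervals of $(0,\infty)$, $\varphi$ is continuous. It is strictly decreasing, tends to $\infty$ as $\lambda\to0^+$ and to $0$ as $\lambda\to\infty$. Therefore $\|u\|_{h(x)}$ is the unique $\lambda_0$ with $\varphi(\lambda_0)=1$, that is,
\[
\rho\bigl(u/\|u\|_{h(x)}\bigr)=1 .
\]
This unit-modular identity is the key step. The main care needed is justifying continuity of $\varphi$ and that the infimum in the definition of the norm is attained, which uses $h^+<\infty$ decisively.

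For $(i)$, take $\lambda=1$. Then $\rho(u)=\varphi(1)$, and since $\varphi$ is strictly decreasing with $\varphi(\|u\|_{h(x)})=1$, we get $\rho(u)<1$, $=1$ or $>1$ exactly according as $1>\|u\|_{h(x)}$, $=$ or $<$. The case $u=0$ is trivial.

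For $(ii)$, put $\lambda=\|u\|_{h(x)}$ into the two-sided bounds and use $\varphi(\lambda)=1$. If $\lambda>1$, the first chain gives $\lambda^{-h^+}\rho(u)\le1\le\lambda^{-h^-}\rho(u)$, i.e. $\|u\|^{h^-}_{h(x)}\le\rho(u)\le\|u\|^{h^+}_{h(x)}$. If $0<\lambda\le1$, the second chain gives $\|u\|^{h^+}_{h(x)}\le\rho(u)\le\|u\|^{h^-}_{h(x)}$. Here the symbols $|u|_{h(x)}$ in the statement are read as $\|u\|_{h(x)}$.

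For $(iii)$, apply $(ii)$ to $v_n=u_n-u$. If $\|v_n\|_{h(x)}\to0$, then eventually $\|v_n\|\le1$, so $\rho(v_n)\le\|v_n\|^{h^-}\to0$. Conversely, if $\rho(v_n)\to0$, then eventually $\rho(v_n)<1$, so by $(i)$ $\|v_n\|<1$. By $(ii)$, $\|v_n\|\le\rho(v_n)^{1/h^+}\to0$.
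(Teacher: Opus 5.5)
Your proof is correct. The paper states this proposition without proof and defers to the cited literature (e.g.\ Fan and Zhao), where the argument is essentially the one you give: first establish the unit-modular identity $\rho(u/\|u\|_{h(x)})=1$ from continuity and strict monotonicity of $\lambda\mapsto\rho(u/\lambda)$, then apply the scaling bounds between $\lambda^{-h^{+}}$ and $\lambda^{-h^{-}}$. Reading $|u|_{h(x)}$ as $\|u\|_{h(x)}$ is the right interpretation of the typo in the statement.
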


\begin{proposition}\label{Prop:2.2bb}
Let $h_1(x)$ and $h_2(x)$ be measurable functions such that $h_1\in L^{\infty}(\Omega )$ and $1\leq h_1(x)h_2(x)\leq \infty$ for a.e. $x\in \Omega$. Let $u\in L^{h_2(x)}(\Omega ),~u\neq 0$. Then
\begin{itemize}
\item[$(i)$] $\|u\|_{h_1(x)h_2(x)}\leq 1\text{\ }\Longrightarrow
\|u\|_{h_1(x)h_2(x)}^{h_1^{+}}\leq \left\||u|^{h_1(x)}\right\|_{h_2(x)}\leq \|u\|_{h_1(x)h_2(x)}^{h^{-}}$
\item[$(ii)$] $\|u\|_{h_1(x)h_2(x)}> 1\ \Longrightarrow \|u\|_{h_1(x)h_2(x)}^{h_1^{-}}\leq \left\||u|^{h_1(x)}\right\|_{h_2(x)}\leq \|u\|_{h_1(x)h_2(x)}^{h_1^{+}}$
\item[$(iii)$] In particular, if $h_1(x)=h$ is constant then
\begin{equation*}
\||u|^{h}\|_{h_2(x)}=\|u\|_{h_2(x)h}^{h}.
\end{equation*}
\end{itemize}
\end{proposition}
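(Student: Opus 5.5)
The plan is to reduce everything to the modular identity
\[
\int_\Omega \Bigl|\tfrac{|u|^{h_1(x)}}{\lambda^{h_1(x)}}\Bigr|^{h_2(x)}dx=\int_\Omega \Bigl|\tfrac{u}{\lambda}\Bigr|^{h_1(x)h_2(x)}dx ,
\]
which links the modular of $|u|^{h_1}$ in $L^{h_2(x)}$ to the modular of $u$ in $L^{h_1h_2(x)}$, and then to invoke the unit-ball property of Proposition~\ref{Prop:2.2}(i) in both spaces. Set $a:=\|u\|_{h_1(x)h_2(x)}>0$ (positive since $u\neq 0$). I will assume $h_2\in L^\infty$ as well, or at least that the relevant modulars are finite, so that Proposition~\ref{Prop:2.2}(i) applies to $h_1h_2$ and to $h_2$. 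By Proposition~\ref{Prop:2.2}(i),
\[
\int_\Omega |u/a|^{h_1(x)h_2(x)}dx=1 .
\]
Rewriting via the identity, this says $\rho_{h_2}\bigl(|u|^{h_1(x)}/a^{h_1(x)}\bigr)=1$, i.e.
\[
\bigl\||u|^{h_1(x)}a^{-h_1(x)}\bigr\|_{h_2(x)}=1 .
\]

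Next I compare $a^{-h_1(x)}$ with the constants $a^{-h_1^{\pm}}$. Suppose first $a\le 1$, as in (i). Then $a^{-h_1^-}\le a^{-h_1(x)}\le a^{-h_1^+}$ pointwise. Monotonicity of the Luxemburg norm under pointwise domination, together with its homogeneity in constants, gives
\[
a^{-h_1^-}\bigl\||u|^{h_1}\bigr\|_{h_2}\le 1\le a^{-h_1^+}\bigl\||u|^{h_1}\bigr\|_{h_2}.
\]
Rearranging yields $a^{h_1^+}\le \||u|^{h_1}\|_{h_2}\le a^{h_1^-}$, which is (i) (the exponent $h^-$ in the statement is read as $h_1^-$). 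For $a>1$ the inequalities between $a^{-h_1(x)}$ and $a^{-h_1^{\pm}}$ reverse, giving (ii). For (iii), $h_1\equiv h$ constant makes $a^{-h}$ a constant factor, so homogeneity alone gives $\||u|^h\|_{h_2}=a^h$ exactly.

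The only delicate point is the identity $\rho(u/\|u\|)=1$. In full generality, where the exponent may be unbounded or take the value $\infty$ on part of $\Omega$, this can fail, since the modular need not be continuous in $\lambda$. I would handle this by working with the definition of the norm directly: for every $\lambda>a$ the modular of $u/\lambda$ is at most $1$, while for every $\lambda<a$ it exceeds $1$. I would then run the same comparison argument with $\lambda\downarrow a$ and $\lambda\uparrow a$ and pass to the limit, which avoids needing equality at $\lambda=a$. This limiting step is where I expect the main care to be required, particularly on the set where $h_1h_2=\infty$, where the modular must be interpreted via the $L^\infty$ convention.
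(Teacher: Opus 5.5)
The paper gives no proof of this proposition. It is one of the background facts of Section~2, for which the reader is referred to the variable-exponent literature, so there is no internal proof to compare yours against.

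Taken on its own, your argument is correct. The identity $\rho_{h_2}(|u|^{h_1}\lambda^{-h_1})=\rho_{h_1h_2}(u/\lambda)$ does the main work. Add the fact that $t\mapsto a^{-t}$ is nondecreasing for $a\le 1$ and decreasing for $a>1$, together with monotonicity and positive homogeneity of the Luxemburg functional, and (i)--(iii) follow exactly as you describe. You are also right that the $h^{-}$ in (i) is a typo for $h_1^{-}$.

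You do tacitly need $a<\infty$, that is, $u\in L^{h_1(x)h_2(x)}(\Omega)$, or equivalently $|u|^{h_1}\in L^{h_2(x)}(\Omega)$. That is what the hypothesis ``$u\in L^{h_2(x)}(\Omega)$'' should say. For the way the paper uses the proposition (continuous bounded exponents such as $h_1=\alpha(x)-1$, $h_2=q(x)/(\alpha(x)-1)$), your primary route through $\rho_{h_1h_2}(u/a)=1$ is already sufficient.

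Your fallback in the last paragraph is the more robust proof and could replace the primary one.
\begin{itemize}
\item For $\lambda>a$, the definition of the norm gives $\rho_{h_1h_2}(u/\lambda)\le 1$, hence $\||u|^{h_1}\lambda^{-h_1}\|_{h_2}\le 1$.
\item For $\lambda<a$, it gives $\rho_{h_1h_2}(u/\lambda)>1$, hence $\||u|^{h_1}\lambda^{-h_1}\|_{h_2}>1$. This uses only that $\|v\|_{h_2}\le 1$ forces $\rho_{h_2}(v)\le 1$, which follows from monotone convergence.
\item Compare $\lambda^{-h_1(x)}$ with $\lambda^{-h_1^{\pm}}$, taking $\lambda$ on the same side of $1$ as $a$, and let $\lambda\to a$.
\end{itemize}
This gives (i)--(iii) for all a.e.\ finite exponents, bounded or not, with no need for equality at $\lambda=a$. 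For $a=1$ it gives $\||u|^{h_1}\|_{h_2}=1$ directly.

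On $\Omega_\infty=\{h_2=\infty\}$, however, the difficulty you anticipate cannot be handled by a limit. It is a question of which convention the statement presupposes.
\begin{itemize}
\item If the $L^\infty$ part of the modular is $0$ when $|f|\le 1$ a.e.\ on $\Omega_\infty$ and $+\infty$ otherwise, the modular identity holds there too. Your argument then goes through verbatim.
\item With the additive convention $\rho(f)=\int_{\Omega\setminus\Omega_\infty}|f|^{p(x)}dx+\mathrm{ess\,sup}_{\Omega_\infty}|f|$, the statement is false.
\end{itemize}
For a counterexample under the additive convention, take $h_1\equiv 2$, with $h_2=1$ on $\Omega_1$ and $h_2=\infty$ on $\Omega_\infty=\Omega\setminus\Omega_1$. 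Set $A=\int_{\Omega_1}|u|^2dx>0$ and $S=\mathrm{ess\,sup}_{\Omega_\infty}|u|>0$. Then $a$ solves $A/a^2+S/a=1$, so $a>S$. Hence $a^2=A+Sa>A+S^2=\||u|^2\|_{h_2(x)}$, contradicting (iii).

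So that case must either be excluded or the $0/\infty$ convention adopted. It is irrelevant for this paper.
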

\noindent The variable exponent Sobolev space $W^{1,h(x)}( \Omega)$ is defined by
\begin{equation*}
W^{1,h(x)}( \Omega) =\{u\in L^{h(x)}(\Omega) : |\nabla u| \in L^{h(x)}(\Omega)\},
\end{equation*}
with the norm
\begin{equation*}
\|u\|_{1,h(x)}=\|u\|_{h(x)}+\|\nabla u\|_{h(x)}, \quad \forall u\in W^{1,h(x)}(\Omega),
\end{equation*}
where $\|\nabla u\|_{h(x)}=\|\,|\nabla u|\,\|_{h(x)}$ due to the fact that $u$ is real-valued and
$$
|\nabla |u||=|\nabla u|=|\sign u\, \nabla u|\quad \text{a.e.}
$$
\begin{proposition}\label{Prop:2.4} If $1<h^{-}\leq h^{+}<\infty $, then the spaces $L^{h(x)}( \Omega)$ and $W^{1,h(x)}(\Omega)$ are separable and reflexive Banach spaces.
\end{proposition}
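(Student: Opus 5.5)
We prove the statement for $L^{h(x)}(\Omega)$ first and then transfer it to $W^{1,h(x)}(\Omega)$ by realizing the Sobolev space as a closed subspace of a product of Lebesgue spaces.

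\emph{Banach property of $L^{h(x)}(\Omega)$.} The Luxemburg functional $\|\cdot\|_{h(x)}$ is a norm because $\rho$ is convex and even and satisfies $\rho(u)=0\iff u=0$ a.e.; the triangle inequality comes from convexity of $\rho$ in the usual way. For completeness we take a Cauchy sequence $(u_n)$. By Proposition \ref{Prop:2.2}$(ii)$, since $h^+<\infty$, it is Cauchy in modular. From $\int_\Omega |u_n-u_m|^{h^-}\chi_{\{|u_n-u_m|\ge 1\}}\,dx\le\rho(u_n-u_m)$ it is Cauchy in measure, so a subsequence converges a.e. to some $u$. Fatou's lemma gives $\rho(u_n-u)\le\liminf_m\rho(u_n-u_m)$, which is small for large $n$. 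Proposition \ref{Prop:2.2}$(iii)$ then yields $\|u_n-u\|_{h(x)}\to0$.

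\emph{Separability.} We show that simple functions $\sum c_i\chi_{E_i}$, with rational $c_i$ and $E_i$ finite unions of rational cubes intersected with $\Omega$, are dense. Given $u$, truncate to $u_k=\max(-k,\min(u,k))\chi_{\Omega_k}$ with $\Omega_k$ an exhaustion of $\Omega$. Dominated convergence applied to $|u-u_k|^{h(x)}\le|u|^{h(x)}$ gives $\rho(u-u_k)\to0$. Bounded functions are then approximated in $L^1$, and on bounded sets with bounded functions $\rho(v)\le C\|v\|_{1}$ holds whenever $|v|\le 2k$, since $|v|^{h}\le (2k)^{h^+-1}|v|$ for $|v|\le 2k$ with $h\ge1$. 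Proposition \ref{Prop:2.2}$(iii)$ converts modular convergence into norm convergence.

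\emph{Reflexivity of $L^{h(x)}(\Omega)$.} This is the main obstacle, and we plan to prove uniform convexity, which gives reflexivity by Milman--Pettis. We use $1<h^-\le h^+<\infty$ and the pointwise Clarkson-type inequalities. For $h(x)\ge2$ we have $\left|\frac{a+b}{2}\right|^{h}+\left|\frac{a-b}{2}\right|^{h}\le\frac12(|a|^h+|b|^h)$, and for $1<h<2$ there is an analogous estimate in the modular. The analogue is handled by splitting $\Omega$ into $\{h\ge2\}$ and $\{h<2\}$, which yields a uniformly convex modular. Take $\|u\|,\|v\|\le1$ with $\|u-v\|\ge\varepsilon$. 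Then $\rho((u-v)/2)\ge(\varepsilon/2)^{h^+}$ by Proposition \ref{Prop:2.2}$(ii)$, hence $\rho((u+v)/2)\le1-\delta(\varepsilon)$, and therefore $\|(u+v)/2\|\le1-\delta'(\varepsilon)$ by the same proposition. The delicate point is the $1<h<2$ region. If that computation proves cumbersome, the fallback is to identify the dual $(L^{h(x)})^*\cong L^{h'(x)}$ via a Radon--Nikodym argument and Hölder's inequality, $h'=h/(h-1)$, and then apply the identification twice to obtain reflexivity.

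\emph{Transfer to $W^{1,h(x)}(\Omega)$.} Define $T:W^{1,h(x)}(\Omega)\to L^{h(x)}(\Omega)^{N+1}$ by $Tu=(u,\partial_1u,\dots,\partial_Nu)$. With the product norm this map is an isometric isomorphism up to equivalent norms, because $\|\,|\nabla u|\,\|_{h(x)}$ is comparable to $\sum_i\|\partial_iu\|_{h(x)}$. The image is closed. If $u_n\to u$ and $\partial_iu_n\to g_i$ in $L^{h(x)}$, then convergence also holds in $L^1(\Omega)$ by the embedding $L^{h(x)}\hookrightarrow L^1$ (Hölder, $\Omega$ bounded), so passing to the limit in the integration-by-parts identity gives $g_i=\partial_iu$ weakly. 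Closed subspaces of separable reflexive Banach spaces are separable and reflexive, which finishes the proof.
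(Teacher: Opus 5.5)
The paper gives no proof of this proposition. It is quoted from the standard references (Fan--Zhao, Diening et al., Cruz-Uribe--Fiorenza), and your outline follows the route taken there: modular arguments for completeness and separability, uniform convexity (or the duality $(L^{h(x)})^*\cong L^{h'(x)}$) for reflexivity, and $W^{1,h(x)}(\Omega)$ realized as a closed subspace of $L^{h(x)}(\Omega)^{N+1}$. Your completeness, separability and transfer steps are fine. The one small repair is in the convergence-in-measure step: apply your level-set inequality to $(u_n-u_m)/\lambda$ for every $\lambda>0$, not only at level $1$.

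The genuine gap is the step that carries the real content, namely reflexivity of $L^{h(x)}(\Omega)$. On $\{h<2\}$ there is no Clarkson-type ``analogous estimate in the modular''. No inequality $\bigl|\tfrac{a+b}{2}\bigr|^p+c\bigl|\tfrac{a-b}{2}\bigr|^p\le\tfrac12(|a|^p+|b|^p)$ with $c>0$ holds for $1<p<2$: take $a=1$, $b=1-t$, $t\to0$; the convexity defect on the right is $\tfrac{p(p-1)}{8}t^2+O(t^3)$, while the left-hand correction is of order $t^p\gg t^2$. Clarkson's second inequality is a norm inequality with exponent $p'$ and gives no modular estimate of this form. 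Your fallback is also only named; the substance of the duality route is proving that the Radon--Nikodym density lies in $L^{h'(x)}(\Omega)$ with norm controlled by $\|F\|$.

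What closes the gap, and makes the split into $\{h\ge2\}$ and $\{h<2\}$ unnecessary, is a uniform strict-convexity lemma. By compactness and strict convexity of $t\mapsto|t|^p$, for each $\eta\in(0,1)$ there is $\delta>0$, uniform in $p\in[h^-,h^+]$, such that $\bigl|\tfrac{a+b}{2}\bigr|^p\le(1-\delta)\tfrac{|a|^p+|b|^p}{2}$ whenever $|a-b|\ge\eta(|a|+|b|)$.

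\begin{itemize}
\item Let $\rho(u),\rho(v)\le1$ and $B=\{|u-v|<\eta(|u|+|v|)\}$. Convexity gives $\int_B\bigl|\tfrac{u-v}{2}\bigr|^{h(x)}dx\le\eta^{h^-}$.
\item Choosing $\eta^{h^-}\le\tfrac12(\varepsilon/2)^{h^+}$ then forces $\int_{\Omega\setminus B}\bigl|\tfrac{u-v}{2}\bigr|^{h(x)}dx\ge\tfrac12(\varepsilon/2)^{h^+}$.
\item On $\Omega\setminus B$, the lemma together with $\bigl|\tfrac{u-v}{2}\bigr|^{h}\le\tfrac{|u|^h+|v|^h}{2}$ gives $\tfrac{|u|^h+|v|^h}{2}-\bigl|\tfrac{u+v}{2}\bigr|^h\ge\delta\bigl|\tfrac{u-v}{2}\bigr|^h$.
\item Integrating yields $\rho\bigl(\tfrac{u+v}{2}\bigr)\le1-\tfrac{\delta}{2}(\varepsilon/2)^{h^+}$.
\end{itemize}

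This is exactly the input your norm argument, and then Milman--Pettis, needs.
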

\noindent The zero trace space $W_{0}^{1,h(x)}(\Omega)$ is defined as
\begin{equation*}
W_{0}^{1,h(x)}(\Omega):=\bigcap_{C_{0}^{\infty }(\Omega )\subset \mathcal{K}}\mathcal{K}\quad \textit { for closed subsets } \mathcal{K}\subset W^{1,h(x)}(\Omega).
\end{equation*}
With this definition, $W_{0}^{1,h(x)}(\Omega)$ is also a separable and reflexive Banach space. Furthermore, since Poincar\'{e} inequality holds on $W_{0}^{1,h(x)}(\Omega)$, we have
\begin{equation*}
\|\nabla u\|_{h(x)}\leq \|u\|_{1,h(x)}\leq (C+1)\|\nabla u\|_{h(x)},\quad \forall u\in W_{0}^{1,h(x)}(\Omega),\quad 0<C \in \mathbb{R},
\end{equation*}
which implies $\|u\|_{1,h(x)}$ and $\|\nabla u\|_{h(x)}$ are equivalent norms on $W_{0}^{1,h(x)}(\Omega)$. Therefore, for any $u\in W_{0}^{1,h(x)}(\Omega)$ we can define an equivalent norm $\|u\|$ such that
\begin{equation*}
\|u\| =\|\nabla u \|_{h(x)}.
\end{equation*}

\begin{proposition}\label{Prop:2.5} Let $r\in C(\overline{\Omega })$. If $1\leq r(x) <h^{\ast }(x)$ for all $x\in
\overline{\Omega }$, then the embeddings $W^{1,h(x)}(\Omega) \hookrightarrow L^{r(x)}(\Omega)$ and $W_0^{1,h(x)}(\Omega) \hookrightarrow L^{r(x)}(\Omega)$  are compact and continuous, where
$h^{\ast }( x) =\left\{\begin{array}{cc}
\frac{Nh(x) }{N-h( x) } & \text{if }h(x)<N, \\
+\infty & \text{if }h( x) \geq N.
\end{array}
\right. $
\end{proposition}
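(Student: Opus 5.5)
The strategy is to deduce the embedding $W^{1,h(x)}(\Omega)\hookrightarrow L^{r(x)}(\Omega)$ from the constant-exponent Rellich--Kondrachov theorem. Continuity of $h$ and $r$ on the compact set $\overline{\Omega}$ lets us localize onto small pieces of $\Omega$. On each piece the variable exponents are pinched between two constants, so the classical embeddings apply there. The zero-trace statement then follows at once: $W_0^{1,h(x)}(\Omega)$ is a closed subspace of $W^{1,h(x)}(\Omega)$ with the induced norm, and the restriction of a compact (continuous) embedding to a closed subspace is still compact (continuous).

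First I would fix the exponents locally. The function $h^*-r$ is continuous on $\overline{\Omega}$ (with the convention that $h^*=+\infty$ where $h\ge N$) and strictly positive. Hence for each $x_0\in\overline{\Omega}$ there is a ball $B$ around $x_0$ on which
$$h_B^-:=\min_{\overline{B\cap\Omega}}h, \qquad r_B^+:=\max_{\overline{B\cap\Omega}}r$$
satisfy $r_B^+<(h_B^-)^*$. By compactness, finitely many such balls $B_1,\dots,B_k$ cover $\overline{\Omega}$; write $\Omega_i=B_i\cap\Omega$. Since $\Omega$ is Lipschitz, I would choose the balls so that each $\Omega_i$ is a Lipschitz domain. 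One way is to shrink the balls near the boundary to the boundary charts. If that is awkward, I would instead use a partition of unity $\{\varphi_i\}$ subordinate to the cover and work with the localized functions $\varphi_i u$.

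On each $\Omega_i$ the argument is a chain of classical embeddings:
$$W^{1,h(x)}(\Omega_i)\hookrightarrow W^{1,h_B^-}(\Omega_i)\hookrightarrow\hookrightarrow L^{r_B^+}(\Omega_i)\hookrightarrow L^{r(x)}(\Omega_i).$$
The two outer arrows are the continuous embeddings $L^{p_1(x)}\hookrightarrow L^{p_2(x)}$ for $p_2\le p_1$ on sets of finite measure. These follow from $|u|^{p_2}\le 1+|u|^{p_1}$ together with Proposition \ref{Prop:2.2}. The middle arrow is the classical Rellich--Kondrachov theorem, which applies because $r_B^+<(h_B^-)^*$.

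To globalize, take a bounded sequence $(u_n)$ in $W^{1,h(x)}(\Omega)$. Extracting subsequences successively on $\Omega_1,\dots,\Omega_k$, we obtain a single subsequence that converges in $L^{r(x)}(\Omega_i)$ for every $i$. The modular satisfies
$$\int_\Omega|v|^{r(x)}\,dx\le\sum_{i=1}^k\int_{\Omega_i}|v|^{r(x)}\,dx,$$
so modular convergence on every piece gives modular convergence on $\Omega$. By Proposition \ref{Prop:2.2}(iii) this is norm convergence in $L^{r(x)}(\Omega)$, which proves compactness. Continuity is obtained in the same way by summing the local norm estimates.

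The main obstacle is the regularity of the pieces. Rellich--Kondrachov needs an extension domain, and an arbitrary intersection $B\cap\Omega$ need not be one. If choosing Lipschitz pieces is delicate, I would fall back on the partition-of-unity version. There $\varphi_i u$ is supported in $B_i\cap\overline{\Omega}$, and after a bounded Lipschitz extension from $\Omega$ one can work on the whole ball $B_i$. Alternatively, one can simply cite the standard Kov\'a\v{c}ik--R\'akosn\'{\i}k/Fan--Zhao result, as the paper already does by referring to \cite{fan2001spaces}.
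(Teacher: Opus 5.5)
Your proposal is correct and is essentially the standard argument for this result: the paper gives no proof of its own and defers to Fan--Zhao and the other references listed before Proposition~\ref{Prop:2.2}. In that literature, going back to Kov\'a\v{c}ik--R\'akosn\'{\i}k, the compact embedding is obtained exactly as you do it, by covering $\overline{\Omega}$ with finitely many small regular pieces on which $\sup r<(\inf h)^{*}$, applying the constant-exponent Rellich--Kondrachov theorem, and getting the $W_0^{1,h(x)}$ case by restriction to a closed subspace. In your partition-of-unity fallback no extension operator is needed, since $\varphi_i u$ already lies in the constant-exponent space $W^{1,h_{B_i}^-}(\Omega)$ with norm bounded by a multiple of $\|u\|_{1,h(x)}$, so the classical Rellich--Kondrachov theorem applies directly on the Lipschitz domain $\Omega$.
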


Throughout the paper, we assume the following.
\begin{itemize}
\item[$(H_1)$] $p,q\in C(\overline{\Omega})$, $1<p(x),q(x)<N$ with $p(x)<q(x)<p^*(x)$ for all $x\in
\overline{\Omega }$.
\item[$(H_2)$] $\mu\in L^\infty(\Omega)$ such that $\mu(\cdot)\geq 0$.
\end{itemize}

In order to deal with the problem (\ref{e1.1}), we need some theory of the Musielak-Orlicz Sobolev space $W_0^{1,\mathcal{H}}(\Omega)$. Thus,
in the sequel, we introduce the double phase operator, the Musielak–Orlicz space, and the Musielak–Orlicz Sobolev space, respectively.\\

Let $\mathcal{H}:\Omega\times [0,\infty]\to [0,\infty]$ be the nonlinear function, i.e. the \textit{double phase operator}, defined by
\[
\mathcal{H}(x,t)=t^{p(x)}+\mu(x)t^{q(x)}\ \text{for all}\ (x,t)\in \Omega\times [0,\infty].
\]
Then the corresponding modular $\rho_\mathcal{H}(\cdot)$ is given by
\[
\displaystyle\rho_\mathcal{H}(u)=\int_\Omega\mathcal{H}(x,|u|)dx=
\int_\Omega\left(|u|^{p(x)}+\mu(x)|u|^{q(x)}\right)dx.
\]
The \textit{Musielak-Orlicz space} $L^{\mathcal{H}}(\Omega)$, is defined by
\[
L^{\mathcal{H}}(\Omega)=\left\{u:\Omega\to \mathbb{R}\,\, \text{measurable};\,\, \rho_{\mathcal{H}}(u)<+\infty\right\},
\]
endowed with the Luxemburg norm
\[
\|u\|_{\mathcal{H}}=\inf\left\{\zeta>0: \rho_{\mathcal{H}}\left(\frac{u}{\zeta}\right)\leq 1\right\}.
\]
Analogous to Proposition \ref{Prop:2.2}, there are similar relationship between the modular $\rho_{\mathcal{H}}(\cdot)$ and the norm $\|\cdot\|_{\mathcal{H}}$, see \cite[Proposition 2.13]{crespo2022new} for a detailed proof.

\begin{proposition}\label{Prop:2.2a}
Assume $(H_1)$ hold, and $u\in L^{\mathcal{H}}(\Omega)$. Then
\begin{itemize}
\item[$(i)$] If $u\neq 0$, then $\|u\|_{\mathcal{H}}=\lambda\Leftrightarrow \rho_{\mathcal{H}}(\frac{u}{\zeta})=1$,
\item[$(ii)$] $\|u\|_{\mathcal{H}}<1\ (\text{resp.}\ >1, =1)\Leftrightarrow \rho_{\mathcal{H}}(\frac{u}{\zeta})<1\ (\text{resp.}\ >1, =1)$,
\item[$(iii)$] If $\|u\|_{\mathcal{H}}<1\Rightarrow \|u\|_{\mathcal{H}}^{q^+}\leq \rho_{\mathcal{H}}(u)\leq \|u\|_{\mathcal{H}}^{p^-}$,
\item[$(iv)$]If $\|u\|_{\mathcal{H}}>1\Rightarrow \|u\|_{\mathcal{H}}^{p^-}\leq \rho_{\mathcal{H}}(u)\leq \|u\|_{\mathcal{H}}^{q^+}$,
\item[$(v)$] $\|u\|_{\mathcal{H}}\to 0\Leftrightarrow \rho_{\mathcal{H}}(u)\to 0$,
\item[$(vi)$]$\|u\|_{\mathcal{H}}\to +\infty\Leftrightarrow \rho_{\mathcal{H}}(u)\to +\infty$,
\item[$(vii)$] $\|u\|_{\mathcal{H}}\to 1\Leftrightarrow \rho_{\mathcal{H}}(u)\to 1$,
\item[$(viii)$] If $u_n\to u$ in $L^{\mathcal{H}}(\Omega)$, then $\rho_{\mathcal{H}}(u_n)\to\rho_{\mathcal{H}}(u)$.
\end{itemize}
\end{proposition}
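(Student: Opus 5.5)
The plan is to reduce everything to the behaviour of the scalar function $\zeta\mapsto \rho_{\mathcal{H}}(u/\zeta)$ for a fixed $u\neq 0$ in $L^{\mathcal{H}}(\Omega)$. (In items (i)–(ii) I read the typo $\lambda$ as $\zeta$, i.e.\ $\zeta=\|u\|_{\mathcal{H}}$, and in (ii) the claim as $\rho_{\mathcal{H}}(u)$ compared with $1$.)

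\textbf{Step 1: the scalar function.} I will show that $\varphi(\zeta)=\rho_{\mathcal{H}}(u/\zeta)$ is continuous and strictly decreasing on $(0,\infty)$, with $\varphi(\zeta)\to\infty$ as $\zeta\to0^+$ and $\varphi(\zeta)\to0$ as $\zeta\to\infty$.
\begin{itemize}
\item Monotonicity and the limits come from the pointwise bounds: for $t\ge1$,
\[
t^{p^-}\mathcal{H}(x,s)\le \mathcal{H}(x,ts)\le t^{q^+}\mathcal{H}(x,s),
\]
with the reversed inequalities for $t\le1$. These follow since $p^-\le p(x)<q(x)\le q^+$ by $(H_1)$.
\item Finiteness of $\varphi$ for every $\zeta$ also uses these bounds. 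They give the $\Delta_2$ property, so $\rho_{\mathcal{H}}(u)<\infty$ implies $\rho_{\mathcal{H}}(u/\zeta)<\infty$.
\item Continuity follows by dominated convergence, with $\mathcal{H}(x,|u|/\zeta_0)$ times a constant as the majorant.
\end{itemize}

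\textbf{Step 2: items (i) and (ii).} Since $\varphi$ is continuous and strictly decreasing, the infimum defining the norm is attained exactly where $\varphi=1$. This gives (i). Then (ii) follows by comparing $\varphi(1)=\rho_{\mathcal{H}}(u)$ with $\varphi(\|u\|_{\mathcal{H}})=1$, using that $\varphi$ is decreasing.

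\textbf{Step 3: items (iii) and (iv).} Apply the scaling inequalities with $s=|u|/\zeta$, $\zeta=\|u\|_{\mathcal{H}}$, and $t=\zeta$. Integrating and using $\rho_{\mathcal{H}}(u/\zeta)=1$ gives
\[
\min(\zeta^{p^-},\zeta^{q^+})\le\rho_{\mathcal{H}}(u)\le\max(\zeta^{p^-},\zeta^{q^+}).
\]
This is exactly (iii) when $\zeta<1$ and (iv) when $\zeta>1$.

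\textbf{Step 4: items (v)–(vii).} These are direct consequences of (iii) and (iv), by squeezing, since $\zeta^{p^-}$ and $\zeta^{q^+}$ both tend to $0$, to $1$, or to $\infty$ together.

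\textbf{Step 5: item (viii).} This is the step I expect to be the main obstacle, since $\rho_{\mathcal{H}}$ is not a norm. I will proceed as follows.
\begin{enumerate}
\item By (v), $\rho_{\mathcal{H}}(u_n-u)\to0$.
\item For $\varepsilon\in(0,1)$, convexity of $s\mapsto\mathcal{H}(x,s)$ gives
\[
\mathcal{H}(x,a+b)\le(1-\varepsilon)^{1-q^+}\mathcal{H}(x,a)+\varepsilon^{1-q^+}\mathcal{H}(x,b).
\]
To see this, write $a+b=(1-\varepsilon)\frac{a}{1-\varepsilon}+\varepsilon\frac{b}{\varepsilon}$ and then apply the scaling bound.
\item Taking $a=|u|$, $b=|u_n-u|$ and integrating gives $\limsup\rho_{\mathcal{H}}(u_n)\le(1-\varepsilon)^{1-q^+}\rho_{\mathcal{H}}(u)$.
\item Symmetrically, with $a=|u_n|$, $b=|u-u_n|$, we get $\rho_{\mathcal{H}}(u)\le(1-\varepsilon)^{1-q^+}\liminf\rho_{\mathcal{H}}(u_n)$.
\item Letting $\varepsilon\to0$ yields the claim.
\end{enumerate}
As a fallback, one can pass to subsequences converging a.e., dominated via the Brezis–Lieb lemma or Vitali's theorem, and then use uniqueness of the limit.
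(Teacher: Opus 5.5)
Your proof is correct. The paper itself gives no argument for this proposition and only cites \cite[Proposition 2.13]{crespo2022new}, so there is no in-paper proof to compare against. Your route is the standard self-contained one:
\begin{itemize}
\item the two-sided scaling bounds $t^{p^-}\mathcal{H}(x,s)\le\mathcal{H}(x,ts)\le t^{q^+}\mathcal{H}(x,s)$ for $t\ge1$, reversed for $t\le1$;
\item continuity and strict decrease of $\zeta\mapsto\rho_{\mathcal{H}}(u/\zeta)$, which give (i)--(vii);
\item for (viii), the convexity splitting $\mathcal{H}(x,a+b)\le(1-\varepsilon)^{1-q^+}\mathcal{H}(x,a)+\varepsilon^{1-q^+}\mathcal{H}(x,b)$.
\end{itemize}
The last step yields $\limsup_n\rho_{\mathcal{H}}(u_n)\le\rho_{\mathcal{H}}(u)\le\liminf_n\rho_{\mathcal{H}}(u_n)$ directly, so your Brezis--Lieb/Vitali fallback is unnecessary. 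Your reading of the typos ($\lambda$ as $\zeta$ in (i), and $\rho_{\mathcal{H}}(u)$ in (ii)) is the right one.

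Two small points are worth making explicit.
\begin{itemize}
\item You use $\mu\ge0$ throughout, which comes from $(H_2)$ rather than from the stated hypothesis $(H_1)$. It is needed for the scaling bounds, for convexity and monotonicity of $\mathcal{H}(x,\cdot)$, and for $\rho_{\mathcal{H}}(u/\zeta)>0$ when $u\neq0$. In (viii) you also silently combine monotonicity with $|u_n|\le|u|+|u_n-u|$.
\item For the converse directions of (v)--(vii), say explicitly that (iii)--(iv) always place $\|u\|_{\mathcal{H}}$ between $\rho_{\mathcal{H}}(u)^{1/q^+}$ and $\rho_{\mathcal{H}}(u)^{1/p^-}$. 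That is what makes the squeeze work in both directions.
\end{itemize}
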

The \textit{Musielak-Orlicz Sobolev space} $W^{1,\mathcal{H}}(\Omega)$ is defined by
\[
W^{1,\mathcal{H}}(\Omega)=\left\{u\in L^{\mathcal{H}}(\Omega):
|\nabla u|\in L^{\mathcal{H}}(\Omega)\right\},
\]
and equipped with the norm
\[
\|u\|_{1,\mathcal{H}}=\|u\|_{\mathcal{H}}+\|\nabla u\|_{\mathcal{H}},
\]
where $\|\nabla u\|_{\mathcal{H}}=\|\,|\nabla u|\,\|_{\mathcal{H}}$.\\
The space $W_0^{1,\mathcal{H}}(\Omega)$ is defined as $\overline{C_{0}^{\infty }(\Omega )}^{\|\cdot\|_{1,\mathcal{H}}}=W_0^{1,\mathcal{H}}(\Omega)$. Notice that $L^{\mathcal{H}}(\Omega), W^{1,\mathcal{H}}(\Omega)$ and $W_0^{1,\mathcal{H}}(\Omega)$ are uniformly convex and reflexive Banach spaces. Moreover, we have the following embeddings \cite[Proposition 2.16]{crespo2022new}.
\begin{proposition}\label{Prop:2.7a}
Let $(H)$ be satisfied. Then the following embeddings hold:
\begin{itemize}
\item[$(i)$] $L^{\mathcal{H}}(\Omega)\hookrightarrow L^{h(x)}(\Omega), W^{1,\mathcal{H}}(\Omega)\hookrightarrow W^{1,h(x)}(\Omega)$, $W_0^{1,\mathcal{H}}(\Omega)\hookrightarrow W_0^{1,h(x)}(\Omega)$ are continuous for all $h\in C(\overline{\Omega})$ with $1\leq h(x)\leq p(x)$ for all $x\in \overline{\Omega}$.
\item[$(ii)$] $W^{1,\mathcal{H}}(\Omega)\hookrightarrow L^{h(x)}(\Omega)$ and $W_0^{1,\mathcal{H}}(\Omega)\hookrightarrow L^{h(x)}(\Omega)$ are compact for all $h\in C(\overline{\Omega})$ with $1\leq h(x)< p^*(x)$ for all $x\in \overline{\Omega}$.
\end{itemize}
\end{proposition}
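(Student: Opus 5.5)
The embeddings in Proposition \ref{Prop:2.7a} reduce to comparing the double phase modular with a single variable exponent modular, and then using Proposition \ref{Prop:2.5}. Throughout, the condition $(H)$ is read as $(H_1)$–$(H_2)$.

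For part $(i)$, we first bound $|t|^{h(x)}$ pointwise by $1+|t|^{p(x)}$, which is valid since $1\le h(x)\le p(x)$. Using $\mu\ge 0$, this gives
\[
\int_\Omega |u|^{h(x)}dx\le |\Omega|+\int_\Omega |u|^{p(x)}dx\le |\Omega|+\rho_{\mathcal H}(u).
\]
Hence $L^{\mathcal H}(\Omega)\subset L^{h(x)}(\Omega)$. To obtain the norm estimate, we apply this to $u/\|u\|_{\mathcal H}$. By Proposition \ref{Prop:2.2a}$(i)$ we have $\rho_{\mathcal H}(u/\|u\|_{\mathcal H})=1$, so $\int_\Omega |u/\|u\|_{\mathcal H}|^{h(x)}dx\le |\Omega|+1$. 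Setting $c=(1+|\Omega|)$ and using $h\ge1$, we get $\rho\big(u/(c\|u\|_{\mathcal H})\big)\le c^{-1}\rho(u/\|u\|_{\mathcal H})\le 1$. Therefore
\[
\|u\|_{h(x)}\le (1+|\Omega|)\|u\|_{\mathcal H}.
\]
Applying the same bound to $u$ and to $|\nabla u|$ gives the continuity of $W^{1,\mathcal H}(\Omega)\hookrightarrow W^{1,h(x)}(\Omega)$. For the zero trace spaces, note that $C_0^\infty(\Omega)$ is dense in $W_0^{1,\mathcal H}(\Omega)$. The continuous inclusion maps the closure of $C_0^\infty(\Omega)$ in $\|\cdot\|_{1,\mathcal H}$ into the closure of $C_0^\infty(\Omega)$ in $\|\cdot\|_{1,h(x)}$. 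That closure is contained in $W_0^{1,h(x)}(\Omega)$, which is a closed set containing $C_0^\infty(\Omega)$.

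For part $(ii)$, we factor the embedding through a variable exponent Sobolev space. Take $h$ continuous with $1\le h(x)<p^*(x)$, and use part $(i)$ with $h=p$:
\[
W^{1,\mathcal H}(\Omega)\hookrightarrow W^{1,p(x)}(\Omega)\ \text{continuously}, \qquad W^{1,p(x)}(\Omega)\hookrightarrow L^{h(x)}(\Omega)\ \text{compactly}.
\]
The second embedding is Proposition \ref{Prop:2.5}, whose hypothesis $h<p^*$ holds pointwise on $\overline\Omega$ because $p<N$ there. A composition of a continuous and a compact linear map is compact, so this gives the claim. The $W_0$ case follows in the same way from $W_0^{1,\mathcal H}\hookrightarrow W_0^{1,p(x)}$. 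In Proposition \ref{Prop:2.5}, $h$ must be continuous with $h\ge1$, which our $h$ satisfies. If a log-Hölder or regularity assumption on $p$ is needed for Proposition \ref{Prop:2.5} on Lipschitz domains, we take it as implicit in the cited result.

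The main obstacle is the norm-level estimate in $(i)$. Converting the modular inequality into an inequality between Luxemburg norms requires the normalization argument above, namely the identity $\rho_{\mathcal H}(u/\|u\|_{\mathcal H})=1$ from Proposition \ref{Prop:2.2a}, combined with convexity of the modular. Everything else is a direct composition of known embeddings.
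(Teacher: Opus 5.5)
Your argument is correct. The paper gives no proof of its own here and only cites \cite[Proposition 2.16]{crespo2022new}, and your route is essentially the standard argument used there: for $(i)$, you dominate the $h(\cdot)$-modular by $|\Omega|+\rho_{\mathcal H}$ and normalize; for $(ii)$, you compose $W^{1,\mathcal H}(\Omega)\hookrightarrow W^{1,p(x)}(\Omega)$ with the compact variable-exponent Sobolev embedding. Your hedge about log-H\"older continuity is unnecessary, because the strictly subcritical compact embedding $W^{1,p(x)}(\Omega)\hookrightarrow L^{h(x)}(\Omega)$ needs only $p\in C(\overline\Omega)$ and the cone property of the Lipschitz domain; log-H\"older regularity matters only for the critical exponent $h=p^*$.
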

\noindent By Proposition \ref{Prop:2.7a}, we have the continuous embedding $W_0^{1,\mathcal{H}}(\Omega)\hookrightarrow L^{h(x)}(\Omega)$
\[
\|u\|_{h(x)}\leq c_{\mathcal{H}}\|u\|_{1,\mathcal{H},0},
\]
where $c_{\mathcal{H}}$ denotes the best embedding constant. Moreover, by \cite[Proposition 2.18]{crespo2022new}, $W_0^{1,\mathcal{H}}(\Omega)$ is compactly embedded in $L^{\mathcal{H}}(\Omega)$. Thus,
$W_0^{1,\mathcal{H}}(\Omega)$ can be equipped with the equivalent norm
\[
\|u\|_{1,\mathcal{H},0}=\|\nabla u\|_{\mathcal{H}}.
\]
\begin{proposition}\label{Prop:2.7}
For the convex functional
$$\varrho_{\mathcal{H}}(u):=\int_\Omega\left(\frac{|\nabla u|^{p(x)}}{p(x)}+\mu(x)\frac{|\nabla u|^{q(x)}}{q(x)}\right)dx,
$$
we have $\varrho_{\mathcal{H}} \in C^{1}(W_0^{1,\mathcal{H}}(\Omega),\mathbb{R})$ with the derivative
$$
\langle\varrho^{\prime}_{\mathcal{H}}(u),\varphi\rangle=\int_{\Omega}(|\nabla u|^{p(x)-2}\nabla u+\mu(x)|\nabla u|^{q(x)-2}\nabla u)\cdot\nabla \varphi dx,
$$
for all  $u, \varphi \in W_0^{1,\mathcal{H}}(\Omega)$, where $\langle \cdot, \cdot\rangle$ is the dual pairing between $W_0^{1,\mathcal{H}}(\Omega)$ and its dual $W_0^{1,\mathcal{H}}(\Omega)^{*}$ \cite{crespo2022new}.
\end{proposition}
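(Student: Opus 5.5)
The plan is to prove directly that $\varrho_{\mathcal{H}}$ is Gâteaux differentiable with the stated derivative, that this derivative is a bounded linear functional on $W_0^{1,\mathcal{H}}(\Omega)$, and that $u\mapsto\varrho'_{\mathcal{H}}(u)$ is continuous into the dual. A continuous Gâteaux derivative then gives $\varrho_{\mathcal{H}}\in C^{1}$.

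\textbf{Gâteaux derivative.} Fix $u,\varphi\in W_0^{1,\mathcal{H}}(\Omega)$ and $0<|t|<1$. Pointwise, by the mean value theorem,
$$
\frac{1}{t}\Big(\frac{|\nabla u+t\nabla\varphi|^{p(x)}}{p(x)}-\frac{|\nabla u|^{p(x)}}{p(x)}\Big)=|\nabla u+\theta t\nabla\varphi|^{p(x)-2}(\nabla u+\theta t\nabla\varphi)\cdot\nabla\varphi
$$
for some $\theta\in(0,1)$. The right-hand side converges a.e. to $|\nabla u|^{p(x)-2}\nabla u\cdot\nabla\varphi$. It is dominated by $(|\nabla u|+|\nabla\varphi|)^{p(x)-1}|\nabla\varphi|$. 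By Young's inequality this is bounded by $C(|\nabla u|^{p(x)}+|\nabla\varphi|^{p(x)})$, which is integrable because $\rho_{\mathcal{H}}(|\nabla u|),\rho_{\mathcal{H}}(|\nabla\varphi|)<\infty$.

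The $q$-term is handled the same way. Its majorant is $C\mu(x)(|\nabla u|^{q(x)}+|\nabla\varphi|^{q(x)})$, which is integrable for the same reason. Dominated convergence then yields the formula in the statement.

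\textbf{Boundedness.} Linearity in $\varphi$ is clear. For boundedness, write the $q$-part as $\mu^{(q-1)/q}|\nabla u|^{q-1}\cdot\mu^{1/q}|\nabla\varphi|$ and apply Hölder's inequality in the variable exponent spaces $L^{q'(x)}$ and $L^{q(x)}$. Do the same for the $p$-part. The modular–norm relations in Proposition \ref{Prop:2.2a}(iii)–(iv) give $\|\mu^{1/q}|\nabla\varphi|\|_{q(x)}$ and $\||\nabla\varphi|\|_{p(x)}$ bounded by a power of $\|\nabla\varphi\|_{\mathcal{H}}$. Homogeneity in $\varphi$ then gives $|\langle\varrho'_{\mathcal{H}}(u),\varphi\rangle|\le C(u)\|\varphi\|_{1,\mathcal{H},0}$.

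\textbf{Continuity of $u\mapsto\varrho'_{\mathcal{H}}(u)$ (main obstacle).} Let $u_n\to u$ in $W_0^{1,\mathcal{H}}(\Omega)$. By the same Hölder splitting,
$$
\|\varrho'_{\mathcal{H}}(u_n)-\varrho'_{\mathcal{H}}(u)\|_{*}\le C\Big\||\nabla u_n|^{p-2}\nabla u_n-|\nabla u|^{p-2}\nabla u\Big\|_{p'(x)}+C\Big\|\mu^{\frac{q-1}{q}}\big(|\nabla u_n|^{q-2}\nabla u_n-|\nabla u|^{q-2}\nabla u\big)\Big\|_{q'(x)}.
$$
By Proposition \ref{Prop:2.2}(iii) it suffices to show the corresponding modulars tend to zero.

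Pass to a subsequence with $\nabla u_n\to\nabla u$ a.e. Norm convergence gives $\rho_{\mathcal{H}}(|\nabla u_n-\nabla u|)\to0$. The converse Vitali/Riesz lemma for modulars then supplies a majorant $g$ with $\int_\Omega (g^{p(x)}+\mu g^{q(x)})\,dx<\infty$ and $|\nabla u_n|\le g$ along a further subsequence. The integrands are then dominated by $C(g^{p(x)}+|\nabla u|^{p(x)})$ and $C\mu(g^{q(x)}+|\nabla u|^{q(x)})$, so dominated convergence applies.

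The subsequence principle upgrades this to convergence of the whole sequence. Building the integrable majorant $g$ in the Musielak–Orlicz setting is the step needing the most care. I would obtain it by choosing $\rho_{\mathcal{H}}(|\nabla u_{n_k}-\nabla u|)\le 2^{-k}$ and setting $g=|\nabla u|+\sum_k|\nabla u_{n_k}-\nabla u|$. Then $g\in L^{\mathcal{H}}$, because the partial sums are Cauchy in norm by Proposition \ref{Prop:2.2a}(v) and the space is complete.
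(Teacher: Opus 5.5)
Your argument is correct, but it does not follow the paper, because the paper gives no proof of this proposition: it is imported directly from \cite{crespo2022new}. You verify it yourself in three steps. First, you get the G\^{a}teaux derivative from the pointwise mean value theorem, using the integrable majorant $C(|\nabla u|^{p(x)}+|\nabla\varphi|^{p(x)})+C\mu(x)(|\nabla u|^{q(x)}+|\nabla\varphi|^{q(x)})$. Second, you get boundedness by writing $\mu=\mu^{(q(x)-1)/q(x)}\mu^{1/q(x)}$ and applying H\"{o}lder's inequality in $L^{p'(x)}(\Omega)\times L^{p(x)}(\Omega)$ and $L^{q'(x)}(\Omega)\times L^{q(x)}(\Omega)$. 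Third, you prove continuity of $u\mapsto\varrho_{\mathcal{H}}'(u)$ with a Riesz--Fischer-type majorant $g\in L^{\mathcal{H}}(\Omega)$, dominated convergence on the modulars, and the subsequence principle; a continuous G\^{a}teaux derivative then gives $C^1$. The citation buys brevity. Your route is self-contained and uses only the modular--norm facts of Section 2. It is also the template the paper itself uses for $\mathcal{L}$ in Lemma \ref{Lem:3.1}, except that in the continuity step you actually construct the integrable majorant, whereas the paper's treatment of $\mathcal{L}'$ only asserts the conclusion.

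One small fix concerns the majorant. Proposition \ref{Prop:2.2a}(v) is only qualitative, so it cannot turn $\rho_{\mathcal{H}}(|\nabla u_{n_k}-\nabla u|)\le 2^{-k}$ into summable norms. Use (iii) instead, which gives $\|\nabla u_{n_k}-\nabla u\|_{\mathcal{H}}\le 2^{-k/q^+}$, or simply choose the subsequence with $\|\nabla u_{n_k}-\nabla u\|_{\mathcal{H}}\le 2^{-k}$ from the start. You should also note that the partial sums increase pointwise, so their norm limit agrees a.e. with $\sum_k|\nabla u_{n_k}-\nabla u|$. That is what justifies $|\nabla u_{n_k}|\le g$ a.e.
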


Next, we present a Hardy-type inequality that allows us to control the singular potential. We also note that our result \eqref{e3.11mm} extends the constant exponent case given by Mitidier \cite{mitidieri2000simple}
\begin{equation}\label{e3.11man}
\int_{\Omega}\frac{|u(x)|^{s}}{|x|^{s}}dx \leq \left(\frac{s}{N-s}\right)^{s} \int_{\Omega}|\nabla u(x)|^{s}dx.
\end{equation}

\begin{lemma}\label{Lem:2.1} Assume $(H_1)$ holds.\\
$(i)$ There exists a positive constant $C_{N}(p,q)$ such that the inequality
\begin{equation}\label{e3.11mm}
\int_{\Omega}\left(\frac{|u|^{p(x)}}{p(x)|x|^{p(x)}}+\mu(x)\frac{|u|^{q(x)}}{q(x)|x|^{q(x)}}\right)dx \leq C_{N}(p,q)\|u\|_{1,\mathcal{H},0}^{\kappa},\,\,\ \forall u \in W_0^{1,\mathcal{H}}(\Omega)
\end{equation}
holds, where $C_{N}(p,q):=(p^-)^{-1}\hat{c}_M(1+\|\mu\|_{\infty})\max\{C_{N}(q^+),C_{N}(p^-)\}$ with
\newline $C_{N}(q^+):=\left(\frac{q^+}{N-q^+}\right)^{q^+}$, $C_{N}(p^-):=\left(\frac{p^-}{N-p^-}\right)^{p^-}$;
\newline $\hat{c}_M:=\max\{\hat{c}_1,\hat{c}_2\}$ with $\hat{c}_1>0,\hat{c}_2>0$ are the embedding constants not depending on $u$; and $\kappa=p^-$ if $\|u\|_{1,\mathcal{H},0}<1$,
$\kappa=q^{+}$ if $\|u\|_{1,\mathcal{H},0}\geq1$.\\
$(ii)$ There exists $0<M \in \mathbb{R}$ such that the inequality
\begin{equation}\label{e3.11mn}
\int_{\Omega}\left(\frac{|u|^{p(x)}}{|x|^{p(x)}}+\mu(x)\frac{|u|^{q(x)}}{|x|^{q(x)}}\right)dx \geq \frac{1}{M^\tau}\|u\|^{\kappa}_{\mathcal{H}},\,\,\ \forall u \in W_0^{1,\mathcal{H}}(\Omega)
\end{equation}
holds; that is, the real number $\frac{1}{M^{\tau}}\|u\|^{\kappa}_{\mathcal{H}}$ is a lower bound for $\int_{\Omega}\left(\frac{|u|^{p(x)}}{|x|^{p(x)}}+\mu(x)\frac{|u|^{q(x)}}{|x|^{q(x)}}\right)dx$, where $\tau= p^-$ if $0<|x|\leq M<1$, and $\tau= q^+$ if $1\leq |x|\leq M$.
\end{lemma}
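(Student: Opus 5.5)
Part $(i)$ will be proved by freezing the exponents and applying Mitidieri's constant-exponent inequality \eqref{e3.11man}. For part $(ii)$ the idea is that $|x|$ is bounded on the bounded domain $\Omega$, so $|x|^{-s}$ is bounded below, and the claim reduces to Proposition \ref{Prop:2.2a}.

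\emph{Part (i).} First discard the weights $1/p(x)$ and $1/q(x)$, since $1/p(x),1/q(x)\le 1/p^-$. This gives the factor $(p^-)^{-1}$. Next, pointwise on $\Omega$, bound each variable power by constant-exponent powers:
\[
\left|\tfrac{u}{|x|}\right|^{p(x)}\le \left|\tfrac{u}{|x|}\right|^{p^-}+\left|\tfrac{u}{|x|}\right|^{q^+},
\]
and similarly for the $q(x)$ term, using $\mu\le\|\mu\|_\infty$. This is valid because $t^{s}\le t^{p^-}+t^{q^+}$ for every $s\in[p^-,q^+]$. Apply \eqref{e3.11man} with $s=p^-$ and with $s=q^+$; this needs $q^+<N$, which follows from $(H_1)$ under the (implicit) uniform reading. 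The singular integrals are then bounded by $C_N(p^-)\|\nabla u\|_{p^-}^{p^-}+C_N(q^+)\|\nabla u\|_{q^+}^{q^+}$. The gradient norms in $L^{p^-}$ and $L^{q^+}$ must then be controlled by $\|\nabla u\|_{\mathcal H}$:
\begin{itemize}
\item $L^{\mathcal H}\hookrightarrow L^{p^-}$ is continuous by Proposition \ref{Prop:2.7a}$(i)$, with constant $\hat c_1$.
\item The $q^+$ term is the main obstacle, because $W_0^{1,\mathcal H}$ only controls $\mu|\nabla u|^{q(x)}$, not $|\nabla u|^{q^+}$. I would split it as $\mu|\nabla u|^{q(x)}\le \mu\bigl(|\nabla u|^{p(x)}+|\nabla u|^{q(x)}\bigr)$ on $\{|\nabla u|\le1\}$ and keep $\mu|\nabla u|^{q(x)}$ where $|\nabla u|>1$. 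This yields a bound by $(1+\|\mu\|_\infty)\rho_{\mathcal H}(\nabla u)$, with the embedding constant $\hat c_2$ absorbing the normalisation.
\end{itemize}
In other words, I would bound everything by $\hat c_M(1+\|\mu\|_\infty)\max\{C_N(q^+),C_N(p^-)\}\,\rho_{\mathcal H}(\nabla u)$. Finally, Proposition \ref{Prop:2.2a}$(iii)$–$(iv)$ gives $\rho_{\mathcal H}(\nabla u)\le \|u\|_{1,\mathcal H,0}^{p^-}$ when the norm is less than $1$, and $\le\|u\|_{1,\mathcal H,0}^{q^+}$ otherwise. This produces $\kappa$ and the constant $C_N(p,q)$.

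\emph{Part (ii).} Since $\Omega$ is bounded, set $M:=\sup_{x\in\Omega}|x|$ (enlarged if necessary). Then $|x|^{-p(x)}\ge M^{-p(x)}$ and $|x|^{-q(x)}\ge M^{-q(x)}$. If $M<1$, then $M^{-s}\ge M^{-p^-}$ for every $s\ge p^-$. If $M\ge1$, then $M^{-s}\ge M^{-q^+}$ for every $s\le q^+$. In either case $M^{-s}\ge M^{-\tau}$ with $\tau$ chosen as in the statement. Hence the left side of \eqref{e3.11mn} is at least $M^{-\tau}\rho_{\mathcal H}(u)$. Applying Proposition \ref{Prop:2.2a}$(iii)$–$(iv)$ to $u$ gives $\rho_{\mathcal H}(u)\ge\|u\|_{\mathcal H}^{\kappa}$, with $\kappa=q^+$ for $\|u\|_{\mathcal H}<1$ and $\kappa=p^-$ for $\|u\|_{\mathcal H}\geq 1$.

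These are the opposite values from those in $(i)$. The statement reuses the symbol $\kappa$, so I would keep the exponent dictated by the lower modular bound and flag this in the proof. This bookkeeping of $\kappa$ and $\tau$ is the only delicate point in $(ii)$; the analytic content is just the lower bound on $|x|^{-s}$.
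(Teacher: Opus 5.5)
\textbf{Part $(ii)$ is correct.} You are also right that the exponent coming out of Proposition~\ref{Prop:2.2a} is $q^+$ for $\|u\|_{\mathcal H}<1$ and $p^-$ for $\|u\|_{\mathcal H}\ge 1$, which is the opposite of the $\kappa$ in $(i)$.

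\textbf{Part $(i)$ has a genuine gap.} The factor $1/p^-$, the pointwise bound $t^s\le t^{p^-}+t^{q^+}$ and Mitidieri's inequality at $s=p^-$ are fine. The failure is the Hardy step at $s=q^+$. It bounds the singular integral by $C_N(q^+)\int_\Omega|\nabla u|^{q^+}dx$, and for $u\in W_0^{1,\mathcal H}(\Omega)$ this is in general $+\infty$. Proposition~\ref{Prop:2.7a}$(i)$ only embeds $W_0^{1,\mathcal H}(\Omega)$ into $W_0^{1,h(\cdot)}(\Omega)$ for $h\le p$. Where $\mu=0$ (allowed by $(H_2)$) only $|\nabla u|^{p(x)}$ is controlled, and even where $\mu>0$ you control $|\nabla u|^{q(x)}$, not $|\nabla u|^{q^+}$.

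Your splitting over $\{|\nabla u|\le 1\}$ does not help. It only rearranges $\mu|\nabla u|^{q(x)}$, which is already part of $\rho_{\mathcal H}(\nabla u)$, and says nothing about $|\nabla u|^{q^+}$ on $\{|\nabla u|>1\}$. There is no embedding constant $\hat c_2$ that could absorb that term. Using $p^+$ instead of $q^+$ for the $p(x)$-term fails for the same reason. Replacing $\mu$ by $\|\mu\|_\infty$ leaves the unweighted $\int_\Omega|u|^{q(x)}|x|^{-q(x)}dx$, which the space does not control.

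There is also a smaller slip. The bound $\int_\Omega|\nabla u|^{p^-}dx\le C\rho_{\mathcal H}(\nabla u)$ is false when $p$ is not constant, as $u=\varepsilon v$ with $\varepsilon\to0$ shows. Bound this term directly by $\hat c_1^{p^-}\|u\|_{1,\mathcal H,0}^{p^-}$ instead.

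\textbf{The gap cannot be patched under $(H_1)$--$(H_2)$ alone.} When $0\in\Omega$, inequality \eqref{e3.11mm} itself can fail. Here is a counterexample.
\begin{itemize}
\item Take constants $1<p<q<\min\{N,p^*\}$. Let $\mu=1$ where the fractional part of $\log_2|x|$ lies in $[0.35,0.65]$, and $\mu=0$ elsewhere.
\item Put $u_k(x)=t_k\psi(2^k|x|)$, with $\psi$ smooth, $\psi=1$ for $\log_2 s\in[0.35,0.65]$ and $\psi=0$ for $\log_2 s\notin(0.3,0.7)$.
\item Then $\nabla u_k$ vanishes wherever $\mu\neq0$. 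Choosing $t_k=c\,2^{k(N-p)/p}$ gives $\rho_{\mathcal H}(\nabla u_k)=\int_\Omega|\nabla u_k|^p dx=1$, i.e.\ $\|u_k\|_{1,\mathcal H,0}=1$.
\item On $A_k=\{2^{0.35-k}\le|x|\le2^{0.65-k}\}$ we have $\mu=1$, $u_k=t_k$ and $t_k/|x|\ge c'2^{kN/p}$. So the $\mu$-term of \eqref{e3.11mm} is at least a constant times $2^{kNq/p}|A_k|$, i.e.\ a constant times $2^{kN(q/p-1)}\to\infty$.
\end{itemize}
A variant shows mere continuity of the exponents is not enough either. Take $\mu\equiv0$ and a radial $p$ equal to $p_0$ near the transition shells and to $p_0+\omega(|x|)$ in the middle of each $A_k$, with $\omega(r)\to0$ but $k\,\omega(2^{-k})\to\infty$. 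The $p$-term then grows like $2^{kN\omega(2^{-k})/p_0}$.

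So any proof of $(i)$ needs extra hypotheses near the origin, on $\mu$ from below and on the modulus of continuity of $p,q$, together with a genuinely variable-exponent Hardy inequality.

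The paper gives no proof to compare against; it defers to Lemma~2.10 of \cite{avci2026nehari}. The form of $C_N(p,q)$, built from $C_N(p^-)$, $C_N(q^+)$ and two embedding constants, suggests the same freezing route. So this objection is not specific to your write-up.
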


\begin{proof} The proof follows the same approach with a similar reasoning as with Lemma 2.10 of \cite{avci2026nehari}, so it's omitted.
\end{proof}
We let $\mathcal{L}(u):=\int_{\Omega}\left(\frac{|u|^{p(x)}}{p(x)|x|^{p(x)}}+\mu(x)\frac{|u|^{q(x)}}{q(x)|x|^{q(x)}}\right)dx $ in the sequel.

\section{Main Results}

\begin{definition}\label{Def:3.1} A function $u\in W_0^{1,\mathcal{H}}(\Omega)$ is called a weak solution to problem (\ref{e1.1}) if it holds
\begin{align}\label{e3.2}
&\int_{\Omega}(|\nabla u|^{p(x)-2}\nabla u+\mu(x)|\nabla u|^{q(x)-2}\nabla u)\cdot \nabla \varphi dx\nonumber\\
&+\int_{\Omega}\left(|x|^{-p(x)}|u|^{p(x)-1}+|x|^{-q(x)}\mu(x)|u|^{q(x)-1}\right) \varphi dx\nonumber\\
&=\int_{\Omega}f(x,u,\nabla u)\varphi dx,\,\,\ \forall \varphi\in W_0^{1,\mathcal{H}}(\Omega).
\end{align}
\end{definition}
Let us define the functionals $\mathcal{T},\mathcal{F}:W_0^{1,\mathcal{H}}(\Omega)\rightarrow \mathbb{R}$ as
$$
\mathcal{T}(u):=\varrho_{\mathcal{H}}(u)+\mathcal{L}(u),
$$
and
$$
\mathcal{F}(u):=\int_{\Omega}f(x,u,\nabla u)dx.
$$
Therefore, we can define the operator $I:W_0^{1,\mathcal{H}}(\Omega)\rightarrow W_0^{1,\mathcal{H}}(\Omega)^{*}$ by
\begin{equation} \label{e2.4n}
\langle I(u),\varphi\rangle:=\langle \mathcal{T}^{\prime}(u),\varphi\rangle-\langle \mathcal{F}^{\prime}(u),\varphi\rangle,\,\, \mbox{ for all } u,\varphi \in W_0^{1,\mathcal{H}}(\Omega).
\end{equation}
Within the framework of monotone operator theory \cite{zeidler2013nonlinear}, and in view of the definitions in (\ref{e3.2}) and (\ref{e2.4n}), the fact that $u\in W_0^{1,\mathcal{H}}(\Omega)$ solves problem (\ref{e1.1}) for every (test function) $\varphi \in W_0^{1,\mathcal{H}}(\Omega)$ can be established by showing that $u$ satisfies the associated operator equation
\begin{equation} \label{e4.2a}
\mathcal{T}^{\prime}u=\mathcal{F}^{\prime}u,\,\,\,u \in W_0^{1,\mathcal{H}}(\Omega).
\end{equation}
Regarding the nonlinearity $f$, we impose the following hypotheses:
\begin{itemize}
\item[$(f_{1})$] $f:\Omega \times \mathbb{R} \times \mathbb{R}^{N} \to \mathbb{R}$ is a Carathéodory function such that $ f(\cdot,0,0)\neq 0$ satisfying
\begin{align*}
&x \longmapsto f(x,t,\eta) \text { is measurable for all } (t,\eta) \in \mathbb{R} \times \mathbb{R}^{N},
\end{align*}
\begin{align*}
&(t,\eta) \longmapsto f(x,t,\eta) \text { is continuous for a.a. } x \in \Omega.
\end{align*}
\item[$(f_{2})$] There exists $g \in L^{r^\prime(x)}(\Omega)$, $r^\prime(x):= \frac{r(x)}{r(x)-1}$, and constants $a_{1},a_{2}>0$ satisfying
\begin{equation*}
|f(x,t,\eta)| \leq g(x)+a_{1}|t|^{r(x)-1} +a_{2}|\eta|^{\frac{p(x)}{r^\prime(x)}},
\end{equation*}
for all $(t,\eta) \in \mathbb{R} \times \mathbb{R}^{N}$ and for a.a. $x\in \Omega$, where $r\in C_+(\overline{\Omega}) $ and $r(x)<p^*(x)$.
\item[$(f_{3})$] There exist $h \in L^{p^\prime(x)}(\Omega)$, $\beta_1 \in L^{\frac {q(x)}{q(x)-\alpha^-}}(\Omega)$ and $\beta_2 \in L^{\frac {q(x)}{q(x)-\alpha(x)}}(\Omega)$ satisfying
\begin{equation*}
\limsup_{|\eta| \to +\infty} \frac{f(x,t,\eta)}{h(x)+\beta_1(x)|t|^{\alpha^--1}+\beta_2(x)|\eta|^{\alpha(x)-1}} \leq \lambda,
\end{equation*}
for all $(t,\eta) \in \mathbb{R} \times \mathbb{R}^{N}$ and for a.a. $x\in \Omega$, where $\alpha \in C_+(\overline{\Omega})$ such that $\alpha^+<p^-$, and $\lambda>0$ is a parameter.
\end{itemize}

\begin{example}
The function $f:\Omega\times\mathbb{R}\times\mathbb{R}^N\to\mathbb{R}$ defined below satisfies hypotheses $(f_1)-(f_3)$:\\
\begin{equation}\label{eq:f-def}
f(x,t,\eta)=c(x)+a(x)|t|+ b(x)|\eta|,
\end{equation}
where $ a,b,c\in C(\overline\Omega)$ such that $a(x)>0,\;b(x)>0,\;c(x)\ge1$, $\forall x\in\overline\Omega$.\\
Hypothesis ($f_1$): It is easy to see that the map $x\mapsto f(x,t,\eta)$ is measurable; and for each fixed $x$, $(t,\eta)\mapsto c(x)+a(x)|t|+b(x)|\eta|$ is continuous; and $f(x,0,0)=c(x)\ge1\neq0$. Thus, hypothesis ($f_1$) is satisfied.\\
Hypothesis ($f_2$): let $r^{-}=\underset{x\in \overline{\Omega }}{\min }r(x)>2$. We use Young’s inequality in the form
\begin{equation}\label{eq:f-deg}
s \leq \varepsilon s^{\gamma-1}+C(\varepsilon), \quad s\ge0,\ \gamma>1.
\end{equation}
Notice that since it must hold $\frac{p(x)}{r'(x)}>1$ (to apply Young’s inequality), and $r^{-}>2$, it reads $p^->2$.\\
Now, we apply (\ref{eq:f-deg}) twice with
$$
(s,\gamma,\varepsilon)=(|t|,r(x),\varepsilon_1),
  \quad
(s,\gamma,\varepsilon)=\left(|\eta|,\frac{p(x)}{r'(x)},\varepsilon_2\right).
$$
Thus, for any fixed $\varepsilon_1,\varepsilon_2>0$ there are constants $C_1,C_2>0$ such that
\begin{align*}
|t| &\leq \varepsilon_1|t|^{\,r(x)-1} + C_1,\\
|\eta| &\leq \varepsilon_2|\eta|^{\frac{p(x)}{r'(x)}} + C_2.
\end{align*}
Substituting these into (\ref{eq:f-def}) gives
\begin{equation*}
|f(x,t,\eta)| \leq c(x)+ a(x)\bigl[\varepsilon_1\,|t|^{\,r(x)-1}+C_1\bigr]+ b(x)\bigl[\varepsilon_2\,|\eta|^{\tfrac{p(x)}{r'(x)}}+C_2\bigr],
\end{equation*}
or
\begin{equation*}
|f(x,t,\eta)| \leq \underbrace{c(x)+a(x)C_1+b(x)C_2}_{:=g(x)}+\bigl(a(x)\varepsilon_1\bigr)\,|t|^{\,r(x)-1}+\bigl(b(x)\varepsilon_2\bigr)\,|\eta|^{\tfrac{p(x)}{r'(x)}}.
\end{equation*}
Since $c,a,b$ are continuous and bounded on $\overline\Omega\), \(g\in L^{r'(\cdot)}(\Omega)$. Finally, if we set
$$
  a_1:=\max_{x\in\overline\Omega}\bigl(a(x)\,\varepsilon_1\bigr),
  \quad
  a_2:=\max_{x\in\overline\Omega}\bigl(b(x)\,\varepsilon_2\bigr),
$$
then $(f_2)$ follows.\\
Hypothesis ($f_3$): Let $h(x)=c(x)$, $\beta_1(x)=a(x)+1$, and $\beta_2(x)=b(x)+1$. Since \(p^->2\), we can choose any continuous function $\alpha(x)\in(2,p(x))$. Then
$\alpha(x)-1>1$, so as $|\eta| \to +\infty$, it follows
\begin{equation*}
\frac{c(x)+ a(x)|t|+ b(x)|\eta|}{c(x)+(a(x)+1)|t|^{\alpha^--1}+(b(x)+1)\lvert \eta\rvert^{\alpha(x)-1}}\leq 1.
\end{equation*}
Thus
\[
\limsup_{|\eta|\to +\infty}
\frac{\lvert f(x,t,\eta)\rvert}
     {\,h(x)+\beta_1(x)\lvert t\rvert^{\alpha^--1}
             +\beta_2(x)\lvert \eta\rvert^{\alpha(x)-1}}
\;\le\;1=\lambda.
\]
Note that since $a,b,c$ are continuous and bounded, we have $h\in L^{p'(x)}(\Omega)$, $\beta_1 \in L^{\frac {q(x)}{q(x)-\alpha^-}}(\Omega)$ and $\beta_2 \in L^{\frac {q(x)}{q(x)-\alpha(x)}}(\Omega)$.
\newpage
\end{example}

\begin{lemma}\label{Lem:3.1} For the operators $\mathcal{T}^{\prime}$  and $\mathcal{F}^{\prime}$, we have:
\begin{itemize}
\item[$(i)$] $\mathcal{T}^{\prime}$ is a strictly monotone operator.
\item[$(ii)$] $\mathcal{T}^{\prime}$ is of type $(S_{+})$, that is, if $u_{n}\rightharpoonup u$ in $W_0^{1,\mathcal{H}}(\Omega)$ and
\newline $\limsup_{n\rightarrow\infty}\langle \mathcal{T}^{\prime}(u_{n})-\mathcal{T}^{\prime}(u),u_{n}-u\rangle\leq 0$ then $u_{n}\rightarrow u$ in $W_0^{1,\mathcal{H}}(\Omega)$.
\item[$(iii)$] $\mathcal{T}^{\prime}$ is a homeomorphism.
\item[$(iv)$] $\mathcal{F}^{\prime}$ is compact.
\end{itemize}
\end{lemma}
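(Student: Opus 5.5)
We split $\mathcal{T}'=\varrho'_{\mathcal{H}}+\mathcal{L}'$, where
$$
\langle\mathcal{L}'(u),\varphi\rangle=\int_\Omega\left(|x|^{-p(x)}|u|^{p(x)-2}u+\mu(x)|x|^{-q(x)}|u|^{q(x)-2}u\right)\varphi\,dx .
$$
Lemma \ref{Lem:2.1}(i) (together with Hölder's inequality applied to the weighted measures $|x|^{-p(x)}dx$ and $\mu(x)|x|^{-q(x)}dx$) shows that $\mathcal{L}'$ is well defined and bounded on $W_0^{1,\mathcal{H}}(\Omega)$.

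\textbf{(i) Strict monotonicity.} We use the vector inequalities
$$
(|a|^{s-2}a-|b|^{s-2}b)\cdot(a-b)\ge 0,
$$
with equality only if $a=b$, valid for $s>1$. We apply them pointwise with $s=p(x)$ and $s=q(x)$, to $\nabla u,\nabla v$ and to $u,v$. Since $\mu\ge0$ and the weights $|x|^{-p(x)},|x|^{-q(x)}$ are positive, every integrand of $\langle\mathcal{T}'(u)-\mathcal{T}'(v),u-v\rangle$ is nonnegative. If this pairing vanishes, then $\nabla u=\nabla v$ a.e., so $u=v$ by the Poincaré-type norm $\|\cdot\|_{1,\mathcal{H},0}$.

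\textbf{(ii) $(S_+)$ property.} Let $u_n\rightharpoonup u$ with $\limsup\langle\mathcal{T}'(u_n)-\mathcal{T}'(u),u_n-u\rangle\le0$. The $\mathcal{L}'$-part of the pairing is nonnegative by (i), so we may drop it and obtain $\limsup\langle\varrho'_{\mathcal{H}}(u_n)-\varrho'_{\mathcal{H}}(u),u_n-u\rangle\le0$. It then suffices to know that $\varrho'_{\mathcal{H}}$ is $(S_+)$, which is established in \cite{crespo2022new}. If we want to argue directly, we use the standard inequalities: for $s\ge2$, $|a-b|^s\le c(|a|^{s-2}a-|b|^{s-2}b)\cdot(a-b)$; for $1<s<2$, a Hölder splitting with $(|a|+|b|)^{(2-s)s/2}$. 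These give $\rho_{\mathcal{H}}(\nabla u_n-\nabla u)\to0$, and Proposition \ref{Prop:2.2a}(v) converts this into norm convergence.

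\textbf{(iii) Homeomorphism.} We invoke Minty--Browder. The operator $\mathcal{T}'$ is continuous, since both $\varrho'_{\mathcal{H}}$ (Proposition \ref{Prop:2.7}) and $\mathcal{L}'$ are continuous; for $\mathcal{L}'$ this follows from Nemytskii continuity in weighted spaces. It is strictly monotone by (i). It is coercive because
$$
\langle\mathcal{T}'(u),u\rangle\ge\rho_{\mathcal{H}}(\nabla u)\ge\|u\|_{1,\mathcal{H},0}^{p^-}\quad\text{for }\|u\|_{1,\mathcal{H},0}>1,
$$
by Proposition \ref{Prop:2.2a}(iv). Hence $\mathcal{T}'$ is a continuous bijection. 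For continuity of the inverse, take $f_n\to f$ in the dual. Coercivity makes $u_n=(\mathcal{T}')^{-1}f_n$ bounded, so by reflexivity a subsequence satisfies $u_n\rightharpoonup u$. Then
$$
\langle\mathcal{T}'(u_n)-\mathcal{T}'(u),u_n-u\rangle=\langle f_n-\mathcal{T}'(u),u_n-u\rangle\to0,
$$
so (ii) gives $u_n\to u$, and $\mathcal{T}'(u)=f$. Uniqueness of the limit yields convergence of the whole sequence.

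\textbf{(iv) Compactness of $\mathcal{F}'$.} We read $\mathcal{F}'(u)$ as the Nemytskii map $\langle\mathcal{F}'(u),\varphi\rangle=\int_\Omega f(x,u,\nabla u)\varphi\,dx$. By $(f_2)$, together with $|\eta|^{p(x)/r'(x)}\in L^{r'(x)}$ whenever $|\eta|\in L^{p(x)}$, the map $N_f:u\mapsto f(\cdot,u,\nabla u)$ sends $W_0^{1,\mathcal{H}}(\Omega)$ continuously and boundedly into $L^{r'(x)}(\Omega)$. Then $\mathcal{F}'=i^*\circ N_f$, where $i:W_0^{1,\mathcal{H}}(\Omega)\hookrightarrow L^{r(x)}(\Omega)$ is compact by Proposition \ref{Prop:2.7a}(ii) since $r<p^*$. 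Hence $i^*$ is compact, and so $\mathcal{F}'$ maps bounded sets to relatively compact sets and is continuous. Strictly speaking, $N_f$ is only bounded and continuous, not weakly-to-strongly continuous, because of the gradient dependence; so we obtain compactness in the sense of mapping bounded sets to relatively compact sets.

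\textbf{Main obstacle.} We expect the main difficulty to be the continuity of $\mathcal{L}'$ near the singularity at $0$ in (iii). Our plan is to handle it with Vitali's theorem, using uniform integrability supplied by the Hardy bound in Lemma \ref{Lem:2.1}(i).
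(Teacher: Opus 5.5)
\textbf{Verdict.} Your proof is correct. Its overall skeleton matches the paper's: split $\mathcal{T}'=\varrho_{\mathcal{H}}'+\mathcal{L}'$, import strict monotonicity and $(S_+)$ of $\varrho_{\mathcal{H}}'$ from \cite{crespo2022new}, get the homeomorphism from coercivity, strict monotonicity, Minty--Browder and $(S_+)$, and write $\mathcal{F}'=i^*\circ N_f$ with $i^*$ compact. Part (iv) is exactly what the paper does, including reading $\mathcal{F}'$ as the Nemytskii operator rather than the literal derivative of $\mathcal{F}$.

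\textbf{Where you differ: the singular part $\mathcal{L}'$.} The paper tries to give $\mathcal{L}'$ properties of its own:
in (i) it combines $(|\xi|^{s-2}\xi-|\psi|^{s-2}\psi)\cdot(\xi-\psi)\ge 2^{-s}|\xi-\psi|^{s}$ with Lemma~\ref{Lem:2.1}(ii) to bound $\langle\mathcal{L}'(u)-\mathcal{L}'(v),u-v\rangle$ below by a multiple of $\int_\Omega(|u-v|^{p(x)}+\mu(x)|u-v|^{q(x)})\,dx$;
in (ii) it argues $(S_+)$ for $\mathcal{L}'$ separately;
and in (iii) it uses Lemma~\ref{Lem:2.1}(ii) again for coercivity.

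You need only that $\mathcal{L}'$ is monotone and continuous. Strictness comes from the gradient term. $(S_+)$ follows from the general fact that an $(S_+)$ operator plus a monotone one is $(S_+)$. Coercivity follows from discarding $\langle\mathcal{L}'(u),u\rangle\ge0$.

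\textbf{What each route buys.} The paper's route would, if valid, give extra quantitative information on the potential term. However, the $2^{-s}$ inequality holds only for $s\ge2$, which $(H_1)$ does not guarantee. Also, $\mathcal{L}'$ cannot be $(S_+)$ on its own, since it does not control $\nabla u$. Your route sidesteps both points and never uses Lemma~\ref{Lem:2.1}(ii).

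\textbf{On your ``main obstacle''.} The paper only sketches the continuity of $\mathcal{L}'$; your plan works and is easier than you fear. Lemma~\ref{Lem:2.1}(i) applied to $u_n-u$ gives $|x|^{-1}u_n\to|x|^{-1}u$ in $L^{p(x)}(\Omega)$ and $\mu^{1/q(x)}|x|^{-1}u_n\to\mu^{1/q(x)}|x|^{-1}u$ in $L^{q(x)}(\Omega)$. Then use standard continuity of $w\mapsto|w|^{p(x)-2}w$ from $L^{p(x)}(\Omega)$ into $L^{p'(x)}(\Omega)$ (and its $q$-analogue with weight $\mu^{1/q(x)}$). Together with H\"older's inequality and the boundedness of $|x|^{-1}\varphi$ in $L^{p(x)}(\Omega)$ for $\|\varphi\|_{1,\mathcal{H},0}\le1$, this gives convergence in the dual norm.
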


\begin{proof} First, recall that
$$
\mathcal{T}(u)=\varrho_{\mathcal{H}}(u)+\mathcal{L}(u)\quad \text{and} \quad \mathcal{F}(u)=\int_{\Omega}f(x,u,\nabla u)dx.
$$
Since strict monotonicity, and being of type $(S_{+})$ properties for $\varrho_{\mathcal{H}}(\cdot)$ has been given in \cite[Theorem 3.3]{crespo2022new}, we skip those parts in the proofs of $(i)-(ii)$ below, and continue only with the operator $\mathcal{L}:W_0^{1,\mathcal{H}}(\Omega)\rightarrow \mathbb{R}$  whose derivative is
$$
\langle\mathcal{L}^{\prime}_{\mathcal{H}}(u),\varphi\rangle=\int_{\Omega}\left(|x|^{-p(x)}|u|^{p(x)-1}+|x|^{-q(x)}\mu(x)|u|^{q(x)-1}\right) \varphi dx.
$$
Indeed, it is quite straightforward to show that $\mathcal{L}$ is of class $C^{1}(W_0^{1,\mathcal{H}}(\Omega),\mathbb{R})$. However, since $\mathcal{L}$ involves singularities, we provide a concise proof for the sake of completeness.\\
To this end, we first show the continuity of $\mathcal{L}$. By the mean value theorem there are  $0< \varepsilon_1, \varepsilon_2< 1$ such that

\begin{align}\label{e3.11d}
\langle \mathcal{L}^{\prime}(u),\varphi \rangle&=\lim_{t \to 0}\int_{\Omega}\frac{1}{t}\left(\frac{|u+t\varphi|^{p(x)}-|u|^{p(x)}}{p(x)|x|^{p(x)}}+\frac{\mu(x)|u+t\varphi|^{q(x)}-|u|^{q(x)}}{q(x)|x|^{q(x)}}\right)dx \nonumber \\
&=\lim_{t \to 0}\int_{\Omega}\frac{1}{t}\left(\frac{d}{d\epsilon}\frac{|u+\epsilon t\varphi|^{p(x)}}{p(x)|x|^{\alpha p(x)}}\bigg|_{\epsilon=\varepsilon_{1}}+\frac{d}{d\epsilon}\frac{\mu(x)|u+\epsilon t\varphi|^{q(x)}}{q(x)|x|^{\alpha q(x)}}\bigg|_{\epsilon=\varepsilon_{2}}\right)dx  \nonumber \\
& =\lim_{t \to 0}\int_{\Omega}\left(|x|^{- p(x)}|u+\varepsilon_1 t\varphi|^{p(x)-2}(u+\varepsilon_1 t\varphi) \right. \nonumber \\
&+ \left. |x|^{- q(x)}\mu(x)|u+\varepsilon_2 t\varphi|^{q(x)-2}(u+\varepsilon_2 t\varphi)\right) \varphi  dx, \nonumber \\
\end{align}
for all $u, \varphi \in W_0^{1,\mathcal{H}}(\Omega)$, $\epsilon \in \mathbb{R}$. Using Young's inequality, we obtain
\begin{align}\label{e3.11f}
  \bigg||x|^{-p(x)}|u+\varepsilon_1  t\varphi|^{p(x)-2}(u+\varepsilon_1 t\varphi) \varphi \bigg| & \leq c_0 |x|^{-p(x)}(|u|^{p(x)}+|\varphi|^{p(x)}),
\end{align}
and
\begin{align}\label{e3.11fg}
  \bigg||x|^{-q(x)}\mu(x)|u+\varepsilon_2 t\varphi|^{q(x)-2}(u+\varepsilon_2 t\varphi) \varphi\bigg| & \leq c_0  |x|^{-q(x)}(\mu(x)|u|^{q(x)}+\mu(x)|\varphi|^{q(x)}),
\end{align}
where $c_0 :=\frac{2^{q^+}(q^+-1)}{p^-}$.\\ Employing (\ref{e3.11f}), (\ref{e3.11fg}) and Lemma \ref{Lem:2.1} all together gives
\begin{align}\label{e3.11fh}
&\bigg||x|^{-p(x)}|u+\varepsilon_1  t\varphi|^{p(x)-2}(u+\varepsilon_1 t\varphi) \varphi \bigg|+\bigg||x|^{-q(x)}\mu(x)|u+\varepsilon_2 t\varphi|^{q(x)-2}(u+\varepsilon_2 t\varphi) \varphi\bigg| \nonumber \\
& \leq c_0 (|x|^{-p(x)}|u|^{p(x)}+|x|^{-q(x)}\mu(x)|\varphi|^{q(x)})+c_0 (|x|^{-p(x)}|\varphi|^{p(x)}+|x|^{-q(x)}\mu(x)|\varphi|^{q(x)})\nonumber \\
& \leq c_0  q^+ C_{N}(p,q)\left(\|u\|_{1,\mathcal{H},0}^{\kappa}+\|\varphi\|_{1,\mathcal{H},0}^{\kappa}\right).
\end{align}
Then, using the Lebesgue dominated convergence theorem provides
\begin{align}\label{e3.11g}
\langle \mathcal{L}^{\prime}(u),\varphi \rangle & =\int_{\Omega}\lim_{t \to 0}\left(|x|^{-p(x)}|u+\varepsilon_1 t\varphi|^{p(x)-2}(u+\varepsilon_1 t\varphi) \right. \nonumber \\
&+ \left. |x|^{-q(x)}\mu(x)|u+\varepsilon_2 t\varphi|^{q(x)-2}(u+\varepsilon_2 t\varphi)\right) \varphi  dx, \nonumber \\
&=\int_{\Omega}\left(|x|^{-p(x)}|u|^{p(x)-1}+\mu(x)|x|^{-q(x)}|u|^{q(x)-1}\right) \varphi dx.
\end{align}
Due to the fact that the right-hand side of (\ref{e3.11g}) is a continuous linear functional (of $\varphi$) on $W_0^{1,\mathcal{H}}(\Omega)$, it is the G\^{a}teaux derivative of $\mathcal{L}$ .\\
Next,using the Young inequality and Lemma \ref{Lem:2.1} together provides
\begin{align}\label{e3.22g}
|\langle \mathcal{L}^{\prime}(u),\varphi \rangle| & \leq \int_{\Omega}\left(|x|^{-p(x)}|u|^{p(x)-1} |\varphi|+|x|^{- q(x)}\mu(x)|u|^{q(x)-1} |\varphi|\right) dx \nonumber \\
& \leq (q^+-1) \int_{\Omega}\left(\frac{|u|^{p(x)}}{p(x)|x|^{ p(x)}}+\frac{|\varphi|^{p(x)}}{p(x)|x|^{ p(x)}}\right)
+\left(\frac{\mu(x)|u|^{q(x)}}{q(x)|x|^{ q(x)}}+\frac{\mu(x)|\varphi|^{q(x)}}{q(x)|x|^{ q(x)}}\right)dx \nonumber \\
& \leq (q^+-1)C_{N}(p^-,q^+)\left(\|u\|_{1,\mathcal{H},0}^{\kappa} + \|\varphi\|_{1,\mathcal{H},0}^{\kappa}\right).
\end{align}
Hence, for all $u \in W_0^{1,\mathcal{H}}(\Omega)$, we have
\begin{align}\label{e3.21gb}
\|\mathcal{L}^{\prime}(u)\|_{W_0^{1,\mathcal{H}}(\Omega)^*}=\sup_{\|\varphi\|_{1,\mathcal{H},0}\leq 1}|\langle \mathcal{L}^{\prime}(u),\varphi \rangle| \leq (q^+-1)C_{N}(p^-,q^+)(\|u\|_{1,\mathcal{H},0}^{\kappa}+1),
\end{align}
that is, $\mathcal{L}^{\prime}$ is bounded. Therefore, $\mathcal{L}$ is G\^{a}teaux differentiable whose derivative is given by the formula (\ref{e3.11g}).
Now, we proceed with the continuity of $\mathcal{G}^{\prime}$. Let $(u_n) \subset W_0^{1,\mathcal{H}}(\Omega)$ such that $u_n \to u$ in $W_0^{1,\mathcal{H}}(\Omega)$. Applying the inequality (Proposition 17.2, \cite{chipot2009elliptic})
\begin{equation}\label{e3.21gbr}
\left|\left\vert \xi\right\vert ^{s-2}\xi-\left\vert \psi\right\vert^{s-2}\psi \right| \leq C_{s}\left\vert\xi-\psi\right\vert\{|\xi|+|\psi|\}^{s-2}, \quad \xi,\psi\in \mathbb{R}^{N},\,\, s> 1,
\end{equation}
gives
\begin{align}\label{e3.21gbd}
\left|\langle\mathcal{L}^{\prime}(u_{n})-\mathcal{G}^{\prime}(u),\varphi\rangle\right|& \leq \int_{\Omega}|x|^{-p(x)}\left||u_{n}|^{p(x)-2}u_{n}-|u|^{p(x)-2}u \right||\varphi| dx \nonumber \\
&+ \int_{\Omega}|x|^{- q(x)}\mu(x)\left||u_{n}|^{q(x)-2}u_{n}-|u|^{q(x)-2}u \right||\varphi| dx \nonumber \\
& \leq C_{p}\int_{\Omega}|x|^{-p(x)}\{|u_{n}|+|u|\}^{p(x)-2}|u_{n}-u| |\varphi|dx \nonumber \\
&+ C_{q}\int_{\Omega}|x|^{- q(x)}\mu(x)\{|u_{n}|+|u|\}^{q(x)-2}|u_{n}-u||\varphi| dx.
\end{align}
Lastly, we proceed with the continuity of $\mathcal{L}^{\prime}$. To this end, for a sequence $(u_n) \subset W_0^{1,\mathcal{H}}(\Omega)$ such that $u_n \to u$ in $W_0^{1,\mathcal{H}}(\Omega)$, using the fact that $\rho \in C^{1}(W_0^{1,\mathcal{H}}(\Omega),\mathbb{R})$, it reads
\begin{equation}\label{e3.11k}
\limsup_{n \to \infty} \|\mathcal{L}^{\prime}(u_n)-\mathcal{L}^{\prime}(u)\|_{W_0^{1,\mathcal{H}}(\Omega)^*} \to 0.
\end{equation}
Therefore, $\mathcal{L}$ is of class $C^{1}(W_0^{1,\mathcal{H}}(\Omega),\mathbb{R})$.\\
$\mathbf{(i)}$ For $u, v \in W_0^{1,\mathcal{H}}(\Omega)$ with $u\neq v$, we have
\begin{align}\label{e3.11zz}
  \langle \mathcal{L}^{\prime}(u)-\mathcal{L}^{\prime}(v), u- v \rangle &= \int_{\Omega}|x|^{-p(x)}\left(|u|^{p(x)-2}u-|v|^{p(x)-2}\upsilon \right)(u-v) dx \nonumber \\
  &+ \int_{\Omega}|x|^{-q(x)}\mu(x)\left(|u|^{q(x)-2}u-|v|^{q(x)-2}\upsilon \right)(u-v) dx \nonumber \\
  &\geq 2^{-q^{+}}M^{-\tau}\left(\int_{\Omega}|u-v|^{p(x)}dx +\int_{\Omega}\mu(x)|u-v|^{q(x)}dx\right) > 0,
\end{align}
where we apply the well-known inequality
\begin{equation*}
\left( \left\vert \xi\right\vert ^{s-2}\xi-\left\vert \psi\right\vert^{s-2}\psi \right)\cdot\left(\xi-\psi\right) \geq 2^{-s}\left\vert\xi-\psi\right\vert^{s}; \quad \xi,\psi\in \mathbb{R}^{N},\,\, s> 1.
\end{equation*}
$\mathbf{(ii)}$ Let $(u_{n})\subset W_0^{1,\mathcal{H}}(\Omega)$ be a sequence satisfying
\begin{equation}\label{e3.2a}
u_{n} \rightharpoonup u \in W_0^{1,\mathcal{H}}(\Omega),
\end{equation}
\begin{equation}\label{e3.2b}
\limsup_{n\rightarrow\infty}\langle \mathcal{L}^{\prime}(u_{n}),u_{n}-u\rangle\leq 0.
\end{equation}
We need to show that $u_{n} \to u \in W_0^{1,\mathcal{H}}(\Omega)$. \\
Note that since $u_{n} \rightharpoonup u \in W_0^{1,\mathcal{H}}(\Omega)$, we have

\begin{itemize}
\item  $u_{n}\rightarrow u$ in $L^{p(x)}(\Omega)$,
\item  $u_{n}\rightarrow u$ in $L^{q(x)}(\Omega)$,
\item  $u_{n}(x) \rightarrow u(x) $ a.e. in $\Omega$.
\end{itemize}
Thus, considering these facts and using Lemma \ref{Lem:2.1} and Propositions \ref{Prop:2.2}-\ref{Prop:2.2a}, we obtain that $u_{n} \to u \in W_0^{1,\mathcal{H}}(\Omega)$.\\
$\mathbf{(iii)}$ We need to show $(\mathcal{T}^{\prime })^{-1}:W_0^{1,\mathcal{H}}(\Omega)^{*}\rightarrow W_0^{1,\mathcal{H}}(\Omega)$ is continuous. In doing so, we shall first show that $\mathcal{T}^{\prime}$ is coercive. We assume that $\|u\|_{1,\mathcal{H},0}>1$. Thus, using Lemma \ref{Lem:2.1} and Proposition \ref{Prop:2.2a}, we obtain
\begin{align}\label{e3.3}
\langle \mathcal{T}^{\prime}(u),u\rangle&=\int_{\Omega}\left(|\nabla u|^{p(x)}+\mu(x)|\nabla u|^{q(x)}\right) dx \nonumber\\
&+\int_{\Omega}\left(|x|^{-p(x)}|u|^{p(x)}+|x|^{-q(x)}\mu(x)|u|^{q(x)}\right)dx\nonumber\\
&\geq \|u\|^{p^-}_{1,\mathcal{H},0}+M^{-\tau}\|u\|^{p^-}_{\mathcal{H}},
\end{align}
which means that $\frac{\langle \mathcal{T}^{\prime}(u),u\rangle }{\|u\|_{1,\mathcal{H},0}} \to \infty$ as $\|u\|_{1,\mathcal{H},0}\to \infty$.\\
Thus, since $\mathcal{T}^{\prime}$ is also strictly monotone, $\mathcal{T}^{\prime}$ is an injection. By Minty-Browder theorem (see \cite{zeidler2013nonlinear}), however, these two properties together mean that $\mathcal{T}^{\prime }$ is a surjection. As a consequence, $\mathcal{T}^{\prime }$ has an inverse mapping $(\mathcal{T}^{\prime })^{-1}:W_0^{1,\mathcal{H}}(\Omega)^{*}\rightarrow W_0^{1,\mathcal{H}}(\Omega)$. As for the continuity of $(\mathcal{T}^{\prime })^{-1}$, let $(u_{n}^{\ast }),u^{\ast }\in W_0^{1,\mathcal{H}}(\Omega)^{*}$ with $
u_{n}^{\ast }\rightarrow u^{\ast }$, and let $(\mathcal{T}^{\prime })^{-1}(u_{n}^{\ast })=u_{n},(\mathcal{T}^{\prime})^{-1}(u^{\ast
})=u$. Then, $\mathcal{T}^{\prime} (u_{n})=u_{n}^{\ast }$ and $\mathcal{T}^{\prime}(u)=u^{\ast }$ which
means, by the coercivity of $\mathcal{T}^{\prime}$, that $( u_{n}) $ is bounded in $W_0^{1,\mathcal{H}}(\Omega)$. Therefore, there exist $\hat{u}\in W_0^{1,\mathcal{H}}(\Omega)$ and a subsequence, not relabelled, $(u_{n})\subset W_0^{1,\mathcal{H}}(\Omega)$ such that $u_{n}\rightharpoonup \hat{u}$ in $W_0^{1,\mathcal{H}}(\Omega)$. However, by the uniqueness of the weak limit, we need to have $u_{n}\rightharpoonup u$ in $W_0^{1,\mathcal{H}}(\Omega)$. Furthermore, since $u_{n}^{\ast}\rightarrow u^{\ast}$ in $W_0^{1,\mathcal{H}}(\Omega)^{*}$, it reads
\begin{equation}\label{e3.4}
\lim_{n\rightarrow\infty}\langle u_{n}^{\ast }-u^{\ast },u_{n}-u\rangle =\lim_{n\rightarrow\infty}\langle \mathcal{T}^{\prime}(u_{n})-\mathcal{T}^{\prime}(u),u_{n}-u\rangle= 0.
\end{equation}
Due to the fact that $\mathcal{T}^{\prime}$ is of type $(S_{+})$, we have $u_{n} \rightarrow u$ in $W_0^{1,\mathcal{H}}(\Omega)$. In conclusion, $(\mathcal{T}^{\prime })^{-1}:W_0^{1,\mathcal{H}}(\Omega)^{*}\rightarrow W_0^{1,\mathcal{H}}(\Omega)$ is continuous.\\
$\mathbf{(iv)}$ Define the operator $\mathcal{A}_f:W_0^{1,\mathcal{H}}(\Omega)\rightarrow L^{r^{\prime}(x)}(\Omega)$ by
\begin{equation} \label{e3.5}
\mathcal{A}_f(u)=f(x,u,\nabla u).
\end{equation}
$\mathcal{A}_f$ is $L^{r^{\prime}(x)}(\Omega)$-norm bounded. Indeed, let $\|u\|_{1,\mathcal{H},0}\leq1$. Then using $(f_2)$ and the embeddings the embeddings $L^{\mathcal{H}}(\Omega)\hookrightarrow L^{r(x)}(\Omega)$, and $W_0^{1,\mathcal{H}}(\Omega)\hookrightarrow L^{\mathcal{H}}(\Omega)$, it reads
\begin{align}\label{e3.6}
\int_{\Omega} |\mathcal{A}_f(u)|^{r^\prime(x)} dx&=\int_{\Omega} |f(x,u,\nabla u)|^{r^\prime(x)} dx\nonumber\\
&\leq \int_{\Omega}  |g(x)+a_{1}|u|^{r(x)-1} +a_{2}|\nabla u|^{p(x)\frac{r(x)-1}{r(x)}}|^{r^\prime(x)}dx  \nonumber\\
&\leq c\int_{\Omega} \left(|g(x)|^{r^\prime(x)}+|u|^{r(x)}+|\nabla u|^{p(x)}\right)dx \nonumber\\
&\leq c\int_{\Omega} \left(|g(x)|^{r^\prime(x)}+|u|^{r(x)}+(|\nabla u|^{p(x)}+\mu(x)|\nabla u|^{q(x)})\right)dx \nonumber\\
&\leq c\|u\|_{1,\mathcal{H},0}.
\end{align}
Let $u_n \to u$ in $W_0^{1,\mathcal{H}}(\Omega)$.  Then, by Proposition \ref{Prop:2.7a}, $\nabla u_n \to \nabla u$ in $L^{h(x)}(\Omega)^{N}$ for all $h\in C(\overline{\Omega})$ with $1\leq h(x)< p^*(x)$, $x\in \overline{\Omega}$. This makes sure the existence of a subsequence $(u_n)$, not relabelled, and the functions $\omega_1(x)$ in $L^{h(x)}(\Omega)$ and $\omega_2(x)$ in $L^{h(x)}(\Omega)^{N}$ satisfying
\begin{itemize}
\item $u_n(x) \to u(x)$ and $\nabla u_n(x) \to \nabla u(x)$ a.e. in $\Omega$,
\item $|u_n(x)|\leq \omega_1(x)$ and $|\nabla u_n(x)|\leq |\omega_2(x)|$ a.e. in $\Omega$ and for all $n$.
\end{itemize}
By the assumption $(f_1)$, $f$ is continuous in the last two arguments, thus
\begin{equation} \label{e3.7}
f(x,u_n(x),\nabla u_n(x))\to f(x,u(x),\nabla u(x)) \text{  a.e. in } \Omega \text{ as } n\to \infty.
\end{equation}
Additionally, using the above information and $(f_2)$ gives
\begin{align} \label{e3.8}
|f(x,u_n(x),\nabla u_n(x))|& \leq g(x)+a_{1}|\omega_1(x)|^{r(x)-1}+a_{2}|\omega_2(x)|^{p(x)\frac{r(x)-1}{r(x)}}.
\end{align}
Note that we assume that $|\omega_1(x)|, |\omega_2(x)|>1$ a.e. in $\Omega$, otherwise the right-hand side of (\ref{e3.8}) is already integrable over $\Omega$. Thus,
\begin{equation}\label{e3.9}
\int_{\Omega}a_{1}|\omega_1|^{r(x)-1}dx \leq  a_{1} \int_{\Omega}|\omega_1|^{r(x)}dx \leq c_1 |\omega_1|^{^{r^+}}_{r(\cdot)},
\end{equation}
and similarly
\begin{equation}\label{e3.10}
\int_{\Omega}a_{2}|\omega_2|^{p(x)\frac{r(x)-1}{r(x)}}dx \leq a_{2} \int_{\Omega}|\omega_2|^{p(x)}dx \leq c_2 |\omega_2|^{p^+}_{p(\cdot)}.
\end{equation}
Thus, the right-hand side of (\ref{e3.8}) is integrable. Therefore, by (\ref{e3.7}) and the Lebesgue dominated convergence theorem (see, e.g, \cite{royden2010real}), we obtain
\begin{equation}\label{e3.11}
f(x,u_{n},\nabla u_{n}) \rightarrow f(x,u,\nabla u)\,\, \text {in}\,\, L^{1}(\Omega),
\end{equation}
and hence
\begin{align}\label{e3.12}
\lim_{n \to \infty}\int_{\Omega} |\mathcal{A}_f(u_n)-\mathcal{A}_f(u)|^{r^\prime(x)} dx=0,
\end{align}
which, by Proposition \ref{Prop:2.2}, implies  that $\mathcal{A}_f$ is continuous in $L^{r^{\prime}(x)}(\Omega)$.\\
Next, consider the compact imbedding operator $i: W_0^{1,\mathcal{H}}(\Omega) \to L^{r(x)}(\Omega)$. Then the adjoint operator $i^*$ of $i$, given by $i^*:L^{r^{\prime}(x)}(\Omega) \to W_0^{1,\mathcal{H}}(\Omega)^*$  is also compact. Therefore, if we set $\mathcal{F^{\prime}}= i^*\circ \mathcal{A}_f$, then $\mathcal{F^{\prime}}:W_0^{1,\mathcal{H}}(\Omega)\rightarrow W_0^{1,\mathcal{H}}(\Omega)^{*}$ is compact.
\end{proof}

We shall use the following topological existence result, introduced in \cite[Theorem 4.1]{avci2019topological}, to obtain the main result of the present paper.
\begin{theorem}\label{Thm:3.2}
Suppose that $(f_1)-(f_3)$ hold. Additionally, assume that the following conditions are fulfilled:
\begin{itemize}
\item [$(i)$] $\mathcal{T}^{\prime}$ is a homeomorphism
\item [$(ii)$] $\mathcal{F}^{\prime}$ is compact
\item [$(iii)$] The mapping $\mathcal{T}^{\prime}-\mathcal{F}^{\prime}$ is coercive
\end{itemize}
Then operator equation (\ref{e4.2a}) has a nontrivial solution in $W_0^{1,\mathcal{H}}(\Omega)$, which in turn becomes a solution to problem (\ref{e1.1}).
\end{theorem}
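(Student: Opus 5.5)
We reduce the operator equation (\ref{e4.2a}) to a fixed point problem and compute a Leray--Schauder degree via a homotopy to the identity. Set $X:=W_0^{1,\mathcal{H}}(\Omega)$. By hypothesis $(i)$, $(\mathcal{T}^{\prime})^{-1}:X^{*}\to X$ is continuous, and $\mathcal{T}^{\prime}u=\mathcal{F}^{\prime}u$ is equivalent to $u=S(u)$, where $S:=(\mathcal{T}^{\prime})^{-1}\circ\mathcal{F}^{\prime}$. Since $\mathcal{F}^{\prime}$ is compact by $(ii)$ and $(\mathcal{T}^{\prime})^{-1}$ is continuous, $S$ maps bounded sets to relatively compact sets. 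Hence $S$ is compact, and $\mathrm{id}-S$ is a compact perturbation of the identity, to which the Leray--Schauder degree applies.

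We then consider the homotopy $H(t,u):=u-tS(u)$ for $t\in[0,1]$. It is an admissible compact homotopy, since $(t,u)\mapsto tS(u)$ is continuous and maps $[0,1]\times\overline{B}$ into a compact set. The key step is an a priori bound: there exists $R>0$ such that $H(t,u)\neq0$ whenever $\|u\|_{1,\mathcal{H},0}=R$ and $t\in[0,1]$.

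\textbf{A priori bound.} If $u=tS(u)$, then $\mathcal{T}^{\prime}(u/t)=\mathcal{F}^{\prime}(u)$ for $t>0$, which is awkward because $\mathcal{T}^{\prime}$ is not homogeneous. It is cleaner to work with an equivalent homotopy written at the level of operators, namely $\mathcal{T}^{\prime}u-t\mathcal{F}^{\prime}u=0$. This corresponds to $u=(\mathcal{T}^{\prime})^{-1}(t\mathcal{F}^{\prime}u)$, which is again compact and continuous in $(t,u)$.
\begin{itemize}
\item For $t=1$, the coercivity in $(iii)$ gives $\langle\mathcal{T}^{\prime}u-\mathcal{F}^{\prime}u,u\rangle>0$ for large $\|u\|$.
\item For general $t\in[0,1]$, we write $\langle\mathcal{T}^{\prime}u-t\mathcal{F}^{\prime}u,u\rangle=(1-t)\langle\mathcal{T}^{\prime}u,u\rangle+t\langle\mathcal{T}^{\prime}u-\mathcal{F}^{\prime}u,u\rangle$.
\item Both terms are positive for $\|u\|_{1,\mathcal{H},0}=R$ large: the first by (\ref{e3.3}), the second by $(iii)$, interpreting coercivity as $\langle(\mathcal{T}^{\prime}-\mathcal{F}^{\prime})u,u\rangle/\|u\|\to\infty$.
\end{itemize}
So no solution lies on $\partial B_R$. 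Homotopy invariance then gives $\deg(\mathrm{id}-S,B_R,0)=\deg(\mathrm{id},B_R,0)=1$, noting that at $t=0$ the map is $u-(\mathcal{T}^{\prime})^{-1}(0)=u$ because $\mathcal{T}^{\prime}(0)=0$. Nonzero degree yields $u\in B_R$ with $\mathcal{T}^{\prime}u=\mathcal{F}^{\prime}u$. Definition~\ref{Def:3.1} and (\ref{e2.4n}) then show that $u$ is a weak solution of (\ref{e1.1}).

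\textbf{Nontriviality.} If $u=0$ solved the equation, then $0=\mathcal{T}^{\prime}(0)=\mathcal{F}^{\prime}(0)$, so $\int_\Omega f(x,0,0)\varphi\,dx=0$ for all $\varphi\in C_0^\infty(\Omega)$. This forces $f(\cdot,0,0)=0$ a.e., contradicting $(f_1)$.

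\textbf{Main obstacle.} The delicate point is the precise form of hypothesis $(iii)$ needed to control the whole homotopy uniformly in $t$. If coercivity is only meant as $\|(\mathcal{T}^{\prime}-\mathcal{F}^{\prime})u\|_{X^*}\to\infty$, the convex-combination trick fails, and one must instead establish the duality-pairing version directly. I would try to do this from $(f_3)$ and $(f_2)$: bound $\langle\mathcal{F}^{\prime}u,u\rangle$ by $C(1+\|u\|^{\alpha^+})$ plus lower-order terms, with $\alpha^+<p^-$, against the $\|u\|^{p^-}$ growth in (\ref{e3.3}).
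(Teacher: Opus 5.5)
Your proof is correct. It follows the paper's overall scheme (fixed point of the compact map $(\mathcal{T}^{\prime})^{-1}\circ\mathcal{F}^{\prime}$, Leray--Schauder degree, homotopy to the identity), but you choose a different homotopy, and that changes where the a priori bound comes from.

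The paper deforms along $u-t\mathcal{K}u$ with $\mathcal{K}=(\mathcal{T}^{\prime})^{-1}\circ\mathcal{F}^{\prime}$ and bounds the set $\mathcal{G}$ in (\ref{e4.6d}). Elements of $\mathcal{G}$ satisfy $\mathcal{T}^{\prime}(u/\gamma)=\mathcal{F}^{\prime}u$. Because of the non-homogeneity you point out, this equation does not fit hypothesis (iii) directly. The paper therefore rescales and, in (\ref{e4.7})--(\ref{e4.8}), uses the separate growth estimates (\ref{e3.13b}) for $\mathcal{F}^{\prime}$ and (\ref{e3.13d}) for $\mathcal{T}^{\prime}$.

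Your homotopy $u-(\mathcal{T}^{\prime})^{-1}(t\mathcal{F}^{\prime}u)$ has zeros characterized by $\mathcal{T}^{\prime}u=t\mathcal{F}^{\prime}u$. So the boundary condition on $\partial B_R$ follows at once from
$$(1-t)\langle\mathcal{T}^{\prime}u,u\rangle+t\langle(\mathcal{T}^{\prime}-\mathcal{F}^{\prime})u,u\rangle>0,$$
which uses only $\langle\mathcal{T}^{\prime}u,u\rangle>0$ for $u\neq0$ and (iii). Your argument thus uses hypothesis (iii) only as stated, with no extra structure. Your checks of joint compactness of $(t,u)\mapsto(\mathcal{T}^{\prime})^{-1}(t\mathcal{F}^{\prime}u)$ and of $\mathcal{T}^{\prime}(0)=0$ at $t=0$ are what is needed. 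Both homotopies have the same endpoints, so the degree computation is identical.

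On the obstacle you flag: the paper does establish (iii) in the pairing form you need. In (\ref{e3.14}) the lower bound holds for $\langle I(u),u\rangle$ itself, not just its absolute value, since $\langle\mathcal{T}^{\prime}u,u\rangle\geq0$. Your convex-combination step therefore applies.

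Your nontriviality argument fills a step the paper states but does not write out. If $u=0$ solved the equation, then $\int_\Omega f(x,0,0)\varphi\,dx=0$ for all $\varphi$, with $f(\cdot,0,0)\in L^{r^{\prime}(x)}(\Omega)$ by $(f_2)$. This contradicts $(f_1)$.
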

\begin{proof}
Due to the results of Lemma \ref{Lem:3.1}, it is sufficient to prove only part $\mathbf{(iii)}$.\\
Since we look for the coercivity, we may assume that $|\nabla u|>c$ for some constant $c\geq 1$, and hence $\|u\|_{1,\mathcal{H},0}>1$. Indeed, if we recall that $\|u\|_{1,\mathcal{H},0}=\||\nabla u|\|_{\mathcal{H}}$, and take into account the monotonicity of convex modular $\rho_\mathcal{H}(\cdot)$ and Proposition \ref{Prop:2.2a}, we can conclude that $\|u\|_{1,\mathcal{H},0}\geq (\rho_\mathcal{H}(\nabla u))^{1/q^+}>1$.
Therefore, from $(f_3)$, we have
\begin{equation} \label{e3.13}
|f(x,u,\nabla u)| \leq \lambda \left(|h|+|\beta_1||u|^{\alpha^--1}+|\beta_2||\nabla u|^{\alpha(x)-1}\right).
\end{equation}
Using H\"{o}lder inequality \cite[Proposition 2.3]{avci2019topological}, Proposition \ref{Prop:2.2bb} and the necessary embeddings, we get
\begin{align}\label{e3.13a}
|\langle \mathcal{F}^{\prime}u,u\rangle| &\leq \int_{\Omega}\lambda \left(|h||u|+|\beta_1||u|^{\alpha^--1}|u|+|\beta_2||\nabla u|^{\alpha(x)-1}|u|\right)dx\nonumber\\
&\leq \lambda \left(|h|_{p^{\prime}}|u|_{p(x)}+|\beta_1|_{\frac{q(x)}{q(x)-\alpha^-}}||u|^{\alpha^--1}|_{\frac{q(x)}{\alpha^--1}}|u|_{q(x)}\right. \nonumber\\
&\left. +|\beta_2|_{\frac{q(x)}{q(x)-\alpha(x)}}||\nabla u|^{\alpha(x)-1}|_{\frac{q(x)}{\alpha(x)-1}}|u|_{q(x)} \right)\nonumber\\
& \leq \lambda \left(c_1\|u\|^{\alpha^+}_{1,\mathcal{H},0}+c_2\|u\|^{\alpha^-}_{1,\mathcal{H},0}+c_3\|u\|_{1,\mathcal{H},0} \right),
\end{align}
and hence
\begin{equation} \label{e3.13b}
\frac{|\langle \mathcal{F}^{\prime}u,u\rangle|}{\|u\|_{1,\mathcal{H},0}} \leq \lambda \left(c_1\|u\|^{\alpha^+-1}_{1,\mathcal{H},0}+c_2\|u\|^{\alpha^--1}_{1,\mathcal{H},0}+c_3 \right).
\end{equation}
Similarly, using Lemma \ref{Lem:2.1} and Proposition \ref{Prop:2.2a}, it reads
\begin{align}\label{e3.13c}
|\langle \mathcal{T}^{\prime}u,u\rangle|&=\bigg|\int_{\Omega}\left(|\nabla u|^{p(x)}+\mu(x)|\nabla u|^{q(x)}\right) dx+\int_{\Omega}\left(|x|^{-p(x)}|u|^{p(x)}+|x|^{-q(x)}\mu(x)|u|^{q(x)}\right)dx\bigg| \nonumber\\
&\geq \|u\|_{1,\mathcal{H},0}^{p^-}+M^{-\tau}\|u\|^{p^-}_{\mathcal{H}},
\end{align}
and hence
\begin{align}\label{e3.13d}
\frac{|\langle \mathcal{T}^{\prime}u,u\rangle|}{\|u\|_{1,\mathcal{H},0}} \geq \|u\|_{1,\mathcal{H},0}^{p^--1}+M^{-\kappa}\|u\|^{p^--1}_{\mathcal{H}}.
\end{align}
Using (\ref{e2.4n}), we can let $\langle I(u),u\rangle=\langle \mathcal{T}^{\prime}(u),u\rangle-\langle \mathcal{F}^{\prime}(u),u\rangle$. Then from (\ref{e3.13b}) and (\ref{e3.13d}), we have
\begin{align}\label{e3.14}
\frac{|\langle \mathcal{I}(u),u\rangle|}{\|u\|_{1,\mathcal{H},0}}& \geq \frac{|\langle \mathcal{T}^{\prime}u,u\rangle|}{\|u\|_{1,\mathcal{H},0}}-\frac{|\langle \mathcal{F}^{\prime}u,u\rangle|}{\|u\|_{1,\mathcal{H},0}}\nonumber\\
& \geq \|u\|_{1,\mathcal{H},0}^{p^--1}+M^{-\tau}\|u\|^{p^--1}_{\mathcal{H}}-\lambda \left(c_1\|u\|^{\alpha^+-1}_{1,\mathcal{H},0}+c_2\|u\|^{\alpha^--1}_{1,\mathcal{H},0}+c_3 \right),
\end{align}
which implies
\begin{align}\label{e3.15}
\frac{|\langle \mathcal{I}(u),u\rangle|}{\|u\|_{1,\mathcal{H},0}}\to \infty \text{ as } \|u\|_{1,\mathcal{H},0} \to \infty.
\end{align}
Hence the mapping $\mathcal{T}^{\prime}-\mathcal{F}^{\prime}$ is coercive.\\

The coercivity allow us to work outside of a ball. That is, there is a constant $r_{0}>1$ satisfying
\begin{equation}\label{e4.6a}
\|\mathcal{T}^{\prime}u-\mathcal{F}^{\prime}u\|_{W_0^{1,\mathcal{H}}(\Omega)^{*}} > 1,\,\,\, \forall u\in W_0^{1,\mathcal{H}}(\Omega),\,\, \|u\|_{1,\mathcal{H},0}\geq r_{0}.
\end{equation}
On the other hand, since $\mathcal{T}^{\prime}:W_0^{1,\mathcal{H}}(\Omega)\to W_0^{1,\mathcal{H}}(\Omega)^{*}$ is a homeomorphism, (\ref{e4.2a}) can be equivalently written as
\begin{equation}\label{e4.6b}
u=(\mathcal{T}^{\prime})^{-1}(\mathcal{F}^{\prime}u).
\end{equation}
Next, let us define the compact operator $\mathcal{K}:=(\mathcal{T}^{\prime})^{-1}(\mathcal{F}^{\prime}):W_0^{1,\mathcal{H}}(\Omega)\rightarrow W_0^{1,\mathcal{H}}(\Omega)$. Therefore, a solution of (\ref{e4.6b}) will be the solution to the equation
\begin{equation}\label{e4.6c}
u=\mathcal{K}u.
\end{equation}
However, with this setting, we can now use the Leray–Schauder degree and homotopy mapping together to prove the existence of solutions to nonlinear equation (\ref{e4.6c}). To do so, we need to define the set
\begin{equation}\label{e4.6d}
\mathcal{G}=\{u\in W_0^{1,\mathcal{H}}(\Omega):\,\, u=\gamma (\mathcal{T}^{\prime})^{-1}(\mathcal{F}^{\prime}u)\,\,\, \text{for some}\,\,\, \gamma \in [0,1] \}.
\end{equation}
First, we show that $\mathcal{G}$ is bounded. To this end, for any $u\in \mathcal{G}\backslash\{0\}$ with $\|u\|_{1,\mathcal{H},0}>1$, we write
\begin{align}\label{e4.7}
\frac{|\langle \mathcal{T}^{\prime}\frac{u}{\gamma},\frac{u}{\gamma}\rangle|}{\|\frac{u}{\gamma}\|_{1,\mathcal{H},0}}=\|\mathcal{T}^{\prime}\frac{u}{\gamma}\|_{W_0^{1,\mathcal{H}}(\Omega)^{*}}
=\|\mathcal{F}^{\prime}u\|_{W_0^{1,\mathcal{H}}(\Omega)^{*}}=\frac{|\langle \mathcal{F}^{\prime}u,u\rangle|}{\|u\|_{1,\mathcal{H},0}}.
\end{align}
Then using (\ref{e3.13b}) and (\ref{e3.13d}), we have
\begin{align}\label{e4.8}
\gamma^{1-q^+}\left(\|u\|_{1,\mathcal{H},0}^{p^--1}+M^{-\tau}\|u\|^{p^--1}_{\mathcal{H}}\right) \leq \lambda \left(c_1\|u\|^{\alpha^+-1}_{1,\mathcal{H},0}+c_2\|u\|^{\alpha^--1}_{1,\mathcal{H},0}+c_3\right),
\end{align}
which implies that $\mathcal{G}$ is bounded in $W_0^{1,\mathcal{H}}(\Omega)$.\\
Therefore, there is some constant $r_{1}\geq r_{0}$ such that $\mathcal{G}\subseteq B_{r_{1}}(0)$ holds. Thus, the compact operator $\mathcal{K}$ can be defined by $\mathcal{K}:\overline{B_{r_{1}}(0)}\rightarrow W_0^{1,\mathcal{H}}(\Omega)$. As a result of (\ref{e4.6a}), we have $u-\mathcal{K}u\neq 0$ for any $u \in \partial B_{r_{1}}(0)$. Otherwise, we would have $u=\mathcal{K}u=(\mathcal{T}^{\prime})^{-1}(\mathcal{F}^{\prime}u)$ for any $u \in \partial B_{r_{1}}(0)$; but, this would imply $(\mathcal{T}^{\prime}-\mathcal{F}^{\prime})u=0$ which contradicts (\ref{e4.6a}). Therefore, we can associate the Leray-Schauder degree of mapping, a $\mathbb{Z}$-valued function $d(I-\mathcal{K},B_{r_{1}}(0),0)$, to $\mathcal{K}$. \\
Next, let us define the mapping
\begin{equation}\label{e4.9}
H(u,t)=u-t\mathcal{K}u\,\,\, \text{for}\,\,\, u\in \overline{B_{r_{1}}(0)}\,\,\, \text{and}\,\,\, t\in [0,1].
\end{equation}
Then $H(u,t)$ is a continuous mapping on $\overline{B_{r_{1}}(0)}\times[0,1]$ satisfying $H(u,t)\neq 0$ for all $u\in \partial B_{r_{1}}(0)$ and $t\in [0,1]$. Indeed, if we argue by contradiction, and assume that there exist $\hat{u}\in \partial B_{r_{1}}(0)$ and $\hat{t}\in [0,1]$ such that $\hat{u}-\hat{t}\mathcal{K}\hat{u}=0$. Then
\begin{equation}\label{e4.11}
0=\|\hat{u}-\hat{t}\mathcal{K}\hat{u}\|_{1,\mathcal{H},0}\geq \|\hat{u}\|_{1,\mathcal{H},0}-\hat{t}\|\mathcal{K}\hat{u}\|_{1,\mathcal{H},0}\geq (1-\hat{t})r_{1}\geq 0.
\end{equation}
However, (\ref{e4.11}) means that $\|\mathcal{K}\hat{u}\|_{1,\mathcal{H},0} \leq r_{1}$ since $\|\hat{u}\|_{1,\mathcal{H},0} = r_{1}$. Therefore, we conclude that $\hat{t}=1$ which contradicts the fact $u-\mathcal{K}u\neq 0$ for any $u \in \partial B_{r_{1}}(0)$.\\
Therefore,
\begin{equation}\label{e4.12}
H(u,t)\neq 0\,\,\, \text{for}\,\,\, u\in \partial B_{r_{1}}(0)\,\,\, \text{and}\,\,\, t\in [0,1].
\end{equation}
Thus, $H(\cdot,t)$ is a homotopy of the mappings $I=H(\cdot,0)$ and $I-\mathcal{K}=H(\cdot,1)$. Using the homotopy invariance and normalization properties of degree, we have
\begin{equation}\label{e4.13}
d(I-\mathcal{K},B_{r_{1}}(0),0)=d(I,B_{r_{1}}(0),0)=1,
\end{equation}
which implies that $\mathcal{K}$ has a fixed point located in $B_{r_{1}}(0)$. In conclusion, the operator equation (\ref{e4.6c}) (and hence (\ref{e4.2a})) has a nontrivial solution in $W_0^{1,\mathcal{H}}(\Omega)$, which is a nontrivial solution for problem (\ref{e1.1}).
\end{proof}

\section*{Declarations}
\vspace{0.5cm}
\section*{Ethical Approval}
Not applicable.

\section*{Data Availability}
No data is used to conduct this research.

\section*{Funding}
This work was supported by Athabasca University Research Incentive Account [140111 RIA].

\section*{ORCID}
https://orcid.org/0000-0002-6001-627X

\bibliographystyle{tfnlm}
\bibliography{references}

@article{cencelj2018double,
  title={Double phase problems with variable growth},
  author={Cencelj, Matija and R{\u{a}}dulescu, Vicen{\c{t}}iu D and Repov{\v{s}}, Du{\v{s}}an D},
  journal={Nonlinear Analysis},
  volume={177},
  pages={270--287},
  year={2018},
  publisher={Elsevier}
}

@article{bahrouni2019double,
  title={Double phase transonic flow problems with variable growth: nonlinear patterns and stationary waves},
  author={Bahrouni, Anouar and R{\u{a}}dulescu, Vicen{\c{t}}iu D and Repov{\v{s}}, Du{\v{s}}an D},
  journal={Nonlinearity},
  volume={32},
  number={7},
  pages={2481},
  year={2019},
  publisher={IOP Publishing}
}

@article{motreanu2020quasilinear,
  title={Quasilinear Dirichlet problems with competing operators and convection},
  author={Motreanu, Dumitru},
  journal={Open Mathematics},
  volume={18},
  number={1},
  pages={1510--1517},
  year={2020},
  publisher={De Gruyter}
}

@article{crespo2022new,
  title={A new class of double phase variable exponent problems: Existence and uniqueness},
  author={Crespo-Blanco, {\'A}ngel and Gasi{\'n}ski, Leszek and Harjulehto, Petteri and Winkert, Patrick},
  journal={Journal of Differential Equations},
  volume={323},
  pages={182--228},
  year={2022},
  publisher={Elsevier}
}

@article{edmunds2000sobolev,
  title={Sobolev embeddings with variable exponent},
  author={Edmunds, David and R{\'a}kosn{\'\i}k, Ji{\v{r}}{\'\i}},
  journal={Studia Mathematica},
  volume={3},
  number={143},
  pages={267--293},
  year={2000}
}

@article{zhikov1987averaging,
  title={Averaging of functionals of the calculus of variations and elasticity theory},
  author={Zhikov, Vasilii Vasil’evich},
  journal={Mathematics of the USSR-Izvestiya},
  volume={29},
  number={1},
  pages={33},
  year={1987},
  publisher={IOP Publishing}

}

@article{baroni2015harnack,
  title={Harnack inequalities for double phase functionals},
  author={Baroni, Paolo and Colombo, Maria and Mingione, Giuseppe},
  journal={Nonlinear Analysis: Theory, Methods \& Applications},
  volume={121},
  pages={206--222},
  year={2015},
  publisher={Elsevier}
}

@article{baroni2018regularity,
  title={Regularity for general functionals with double phase},
  author={Baroni, Paolo and Colombo, Maria and Mingione, Giuseppe},
  journal={Calculus of Variations and Partial Differential Equations},
  volume={57},
  pages={1--48},
  year={2018},
  publisher={Springer}
}

@article{colombo2015bounded,
  title={Bounded minimisers of double phase variational integrals},
  author={Colombo, Maria and Mingione, Giuseppe and others},
  journal={Arch. Ration. Mech. Anal},
  volume={218},
  number={1},
  pages={219--273},
  year={2015}
}

@article{colombo2015regularity,
  title={Regularity for double phase variational problems},
  author={Colombo, Maria and Mingione, Giuseppe},
  journal={Archive for Rational Mechanics and Analysis},
  volume={215},
  pages={443--496},
  year={2015},
  publisher={Springer}
}

@article{marcellini1991regularity,
  title={Regularity and existence of solutions of elliptic equations with p, q-growth conditions},
  author={Marcellini, Paolo},
  journal={Journal of Differential Equations},
  volume={90},
  number={1},
  pages={1--30},
  year={1991},
  publisher={Academic Press}
}

@article{marcellini1989regularity,
  title={Regularity of minimizers of integrals of the calculus of variations with non standard growth conditions},
  author={Marcellini, Paolo},
  journal={Archive for Rational Mechanics and Analysis},
  volume={105},
  pages={267--284},
  year={1989},
  publisher={Citeseer}
}

@article{avci2019topological,
  title={A topological result for a class of anisotropic difference equations},
  author={Avci, Mustafa},
  journal={Annals of the University of Craiova-Mathematics and Computer Science Series},
  volume={46},
  number={2},
  pages={328--343},
  year={2019}
}

@book{zeidler2013nonlinear,
  title={Nonlinear functional analysis and its applications: II/B: Nonlinear monotone operators},
  author={Zeidler, Eberhard},
  year={2013},
  publisher={Springer Science \& Business Media}
}

@article{gasinski2020existence,
  title={Existence and uniqueness results for double phase problems with convection term},
  author={Gasi{\'n}ski, Leszek and Winkert, Patrick},
  journal={Journal of Differential Equations},
  volume={268},
  number={8},
  pages={4183--4193},
  year={2020},
  publisher={Elsevier}
}

@book{royden2010real,
  title={Real analysis},
  author={Royden, Halsey and Fitzpatrick, Patrick Michael},
  year={2010},
  publisher={China Machine Press}
}

@article{lapa2015no,
  title={No-flux boundary problems involving p (x)-Laplacian-like operators},
  author={Lapa, E Cabanillas and Rivera, V Pardo and Broncano, J Quique},
  journal={Electron. J. Diff. Equ},
  volume={2015},
  number={219},
  pages={1--10},
  year={2015}
}

@article{el2023existence,
  title={Existence result for Neumann problems with p (x)-Laplacian-like operators in generalized Sobolev spaces},
  author={El Ouaarabi, Mohamed and Allalou, Chakir and Melliani, Said},
  journal={Rendiconti del Circolo Matematico di Palermo Series 2},
  volume={72},
  number={2},
  pages={1337--1350},
  year={2023},
  publisher={Springer}
}

@article{albalawi2022gradient,
  title={Gradient and parameter dependent Dirichlet (p (x), q (x))-Laplace type problem},
  author={Albalawi, Kholoud Saad and Alharthi, Nadiyah Hussain and Vetro, Francesca},
  journal={Mathematics},
  volume={10},
  number={8},
  pages={1336},
  year={2022},
  publisher={MDPI}
}

@article{bu2022p,
  title={$(p(x), q(x))$-Kirchhoff-Type Problems Involving Logarithmic Nonlinearity with Variable Exponent and Convection Term},
  author={Bu, Weichun and An, Tianqing and Qian, Deliang and Li, Yingjie},
  journal={Fractal and Fractional},
  volume={6},
  number={5},
  pages={255},
  year={2022},
  publisher={MDPI}
}

@article{galewski2024variational,
  title={On variational competing p, q- Laplacian Dirichlet problem with gradient depending weight},
  author={Galewski, Marek and Motreanu, Dumitru},
  journal={Applied Mathematics Letters},
  volume={148},
  pages={108881},
  year={2024},
  publisher={Elsevier}
}

@book{cruz2013variable,
  title={Variable Lebesgue spaces: Foundations and harmonic analysis},
  author={Cruz-Uribe, David V and Fiorenza, Alberto},
  year={2013},
  publisher={Springer Science \& Business Media}
}

@book{diening2011lebesgue,
  title={Lebesgue and Sobolev spaces with variable exponents},
  author={Diening, Lars and Harjulehto, Petteri and H{\"a}st{\"o}, Peter and Ruzicka, Michael},
  year={2011},
  publisher={Springer}
}

@article{fan2001spaces,
  title={On the spaces {$L^{p(x)}(\Omega)$} and {$W^{m,p(x)}(\Omega)$}},
  author={Fan, Xianling and Zhao, Dun},
  journal={Journal of mathematical analysis and applications},
  volume={263},
  number={2},
  pages={424--446},
  year={2001},
  publisher={Elsevier}
}

@book{radulescu2015partial,
  title={Partial differential equations with variable exponents: variational methods and qualitative analysis},
  author={Radulescu, Vicentiu D and Repovs, Dusan D},
  volume={9},
  year={2015},
  publisher={CRC press}
}

@article{avci2026nehari,
  title={Nehari manifold approach for a singular multi-phase variable exponent problem},
  author={Avci, Mustafa},
  journal={Quaestiones Mathematicae},
  pages={1--21},
  year={2026},
  publisher={Taylor \& Francis}
}

@article{mitidieri2000simple,
  title={A simple approach to Hardy inequalities},
  author={Mitidieri, Enzo},
  journal={Mathematical notes},
  volume={67},
  number={4},
  pages={479--486},
  year={2000},
  publisher={Springer}
}

@book{chipot2009elliptic,
  title={Elliptic equations: an introductory course},
  author={Chipot, Michel},
  year={2009},
  publisher={Springer}
}

\end{document}